\newcommand{\qi}{\mathbf{i}}
\newcommand{\qj}{\mathbf{j}}
\newcommand{\qk}{\mathbf{k}}
\newcommand{\eps}{\varepsilon}
\newcommand{\C}{\mathbb{C}}
\renewcommand{\H}{\mathbb{H}}
\newcommand{\R}{\mathbb{R}}
\renewcommand{\DH}{\mathbb{DH}}
\newcommand{\cj}[1]{{#1}^\ast}
\newcommand{\ej}[1]{{#1}_\eps}
\newcommand{\cej}[1]{{#1}^\ast_\eps}
\newcommand{\vecLp}{L_p}
\newcommand{\vecLd}{L_d}
\newcommand{\cjvecLp}{\cj{L}_p}
\newcommand{\cjvecLd}{\cj{L}_d}
\newcommand{\vecKp}{K_p}
\newcommand{\vecKd}{K_d}
\DeclareMathOperator{\rgcd}{rgcd}
\newtheorem{theorem}{Theorem}
\newtheorem{lemma}{Lemma}
\theoremstyle{definition}
\newtheorem{definition}{Definition}
\theoremstyle{remark}
\newtheorem{remark}{Remark}
\newtheorem{example}{Example}
\begin{document}

\begin{frontmatter}
\journal{}

\title{Rational Motions of Minimal Quaternionic Degree with Prescribed Line Trajectories}

\author{Zülal Derin Yaqub\corref{cor}}
\ead{derinzulal@gmail.com}
\author{Hans-Peter Schröcker}
\ead{hans-peter.schroecker@uibk.ac.at}
\address{University of Innsbruck, Department of Basic Sciences in Engineering Sciences, Technikerstr.~13, 6020 Innsbruck, Austria}
\cortext[cor]{Corresponding author}

\begin{abstract}
  In this paper, we study how to find rational motions that move a line along a
  given rational ruled surface. Our goal is to find motions with the lowest
  possible degree using dual quaternions. While similar problems for point
  trajectories are well known, the case of line trajectories is more complicated
  and has not been studied. We explain when such motions exist and how to
  compute them. Our method gives explicit formulas for constructing these
  motions and shows that, in many cases, the solution is unique. We also show
  examples and explain how to use these results to design simple mechanisms that
  move a line in the desired way. This work helps to better understand the
  relationship between rational motions and ruled surfaces and may be useful for
  future research in mechanism design.
\end{abstract}

\begin{keyword}
  kinematics \sep dual quaternion \sep rational kinematic ruled surface \sep quaternionic polynomial \sep motion polynomial
  \MSC[2020]{70B10, 51J15, 51N15, 70E15, 14J26, 11R52, 65D17  }
\end{keyword}

\end{frontmatter}

\begin{thenomenclature}
\nomgroup{Symbols}
  \item [{$\H$}]\begingroup algebra of quaternions\nomeqref {0}\nompageref{1}
  \item [{$\DH$}]\begingroup algebra of dual quaternions\nomeqref {0}\nompageref{1}
  \item [{$C = P + \eps D$}]\begingroup motion polynomial with primal part $P$ and dual   part $D$\nomeqref {0}\nompageref{1}
  \item [{$L = L_p + \eps L_d$}]\begingroup rational ruled surface with primal part   $L_p$ and dual part $L_d$\nomeqref {0}\nompageref{1}
  \item [{$\cj{c}$, $\cj{h}$, $\cj{C}$}]\begingroup conjugate quaternion, dual   quaternion, or motion polynomial\nomeqref {0}\nompageref{1}
  \item [{$\ej{c}$, $\ej{h}$, $\ej{C}$}]\begingroup $\eps$-conjugate quaternion, dual   quaternion, or motion polynomial\nomeqref {0}\nompageref{1}
  \item [{$\rgcd C$}]\begingroup greatest common real divisior of quaternionic   polynomial\nomeqref {0}\nompageref{1}

\end{thenomenclature}

\section{Introduction}
\label{sec:introduction}

In the article \cite{selig24} the authors write about the ``design of mechanisms
that move a line along a desired trajectory.'' They construct some particular
mechanisms to generate ruled surfaces and also line congruences. This topic can
be regarded as extension of the path generation problem -- constructing a
mechanism to a given point path -- to line space. While point path generation is
a standard task in mechanism science, it is still considered difficult and often
is solved via a de-tour to motion generation \cite[Section~1.4]{mccarthy11}.
Therefore, it is not surprising that the authors of \cite{selig24} continue
``Indeed the authors could find no mention of this problem in the literature of
the past 120 years or so.''

We agree with this assessment: The mechanical generation of ruled surfaces is
not a well-explored topic. This is maybe a bit surprising as ruled surfaces play
an important role in space kinematics. Examples include the kinematic generation
of ruled surfaces \cite{sprott02} but also ruled surfaces related to important
mechanisms \cite{perez02} and motions \cite[Chapter~IX]{Bottema90}. These
references are far from being exhaustive.

In this article, we build on ideas to construct mechanisms with prescribed
\emph{rational} point trajectories that culminated in a version of Kempe's
Universality Theorem for rational curves \cite{hegedus15,Gallet16,li18}. In
contrast to more general results on algebraic curve generation, it provides a an
asymptotic bound on the number of links and joints that is linear in the curve
degree and also more practical mechanisms to low-degree curves are rather
simple. The first step of the construction in \cite{li18} determines a rational
motion of minimal degree with a prescribed point trajectory \cite{li16} which
then is turned into a mechanism using the factorization theory of rational
motions \cite{hegedus13}. At least in generic cases, this last step is supported
by a mature theory. An open source reference implementation in Python exists
\cite{huczala2024}.

In this article, we pursue a similar target but we not yet aim at a version of
Kempe's universality theorem for ruled surfaces. Instead we focus on the first
step, the construction of rational motions with a prescribed line trajectory.
Arguably, this is more interesting than the corresponding problem for point
trajectories where one can always take the rather trivial translation along the
curve. It is known that this translational motion is generically of minimal
degree in the dual quaternion model of space kinematics. Only in special cases
-- for ``circular'' curves -- rational motions of smaller degree exist
\cite{li16}. Rather surprisingly, the minimal degree rational motion with
prescribed rational point trajectory is \emph{always unique.}

In the following we will consider existence, uniqueness, and computation of
rational motions of minimal degree with a \emph{prescribed line trajectory.} Our
main results are quite similar to the case of rational plane trajectories
\cite{derin25} but differ considerably from the curve case:
\begin{itemize}
\item Rational motions with a given line trajectory exist if and only if a
  certain (rather obvious) condition on the ruled surface is satisfied: Its
  spherical images needs to be rational.
\item In general, the rational motion of minimal degree is unique but -- in
  contrast to the curve case -- there are also examples with non-unique rational
  motions of minimal degree.
\item The spherical component of the minimal degree motion can be computed by
  known algorithms that originate in research on Pythagorean hodograph curves
  \cite{choi02,schroecker24}. We succeed in providing surprisingly compact
  expressions for the translational component: Closed algebraic formulas that
  involve the spherical motion component and expressions obtained by polynomial
  division with remainder and the extended Euclidean algorithm.
\end{itemize}

We continue this article by introducing basic definitions, concepts, and
notation in Section~\ref{sec:preliminaries} before considering existence of
rational motions with prescribed rational line trajectory (rational ruled
surfaces) in Section~\ref{sec:existence}. Our proof of the main existence result
(Theorem~\ref{th:1}) is constructive and provides more or less explicit formulas
for computing the translational motion component. In
Section~\ref{sec:minimal-degree} we demonstrate that the motions obtained in
Section~\ref{sec:existence} contain motions of minimal degree and we show how to
single them out computationally.

We then connect our results to that of \cite{selig24} by discussing the minimal
degree motion of the cylindroid (Plücker conoid) in Section~\ref{sec:cylindroid}
and, in Section~\ref{sec:mechanism-design}, we discuss the construction of a
simple mechanism from a rational motion with prescribed line trajectory at hand
of an example.

As already mentioned, our results and also proofs for the case of line
trajectories are quite similar to the case of plane trajectories, which was
treated in \cite{derin25}, but very different from point trajectories. In order
to avoid unnecessary duplication, we will sometimes use results of
\cite{derin25} or refer the reader to that paper for a more detailed discussion.
However, we will highlight important differences and provide all necessary
information to implement our formulas and algorithms. Furthermore, we illustrate
theoretical results in numerous examples.

\section{Preliminaries}
\label{sec:preliminaries}

This section establishes the foundational definitions and conditions relevant to
our study.

\subsection{Dual Quaternions and Motion Polynomials}

Let \( \H \) denote the skew field of quaternions and \( \DH \) the algebra of
dual quaternions. An element \( q \in \DH \) has the form:
\[
  c = p + \eps d = (p_0 + p_1 \qi + p_2 \qj + p_3 \qk) + \eps (d_0 + d_1 \qi + d_2 \qj + d_3 \qk),
\]
where $\eps$ is an algebra element that squares to zero ($\eps^2 = 0$) and
commutes with the quaternion units $\qi$, $\qj$, $\qk$.

The \emph{conjugate} of $c$ is $\cj{c} = \cj{p} + \eps \cj{d}$, with $\cj{p} =
p_0 - p_1 \qi - p_2 \qj - p_3 \qk$ and $\cj{d} = d_0 - d_1\qi - d_2\qj -
d_3\qk$. The conjugate of a product of dual quaternions equals the product of
the conjugate factors in reverse order, $\cj{(ab)} = \cj{b}\cj{a}$. We will also
use the notion of the \emph{$\eps$-conjugate} $\ej{c} \coloneqq p - \eps d$. The
dual quaternion $c$ is called \emph{vectorial} if $c - \cj{c} = 0$, that is $p_0
= d_0 = 0$. The norm of $c$ is the quantity $c\cj{c} = p\cj{p} + \eps(p\cj{d} +
d\cj{p})$. Since both, $p\cj{p}$ and $p\cj{d} + d\cj{p}$, are real, the norm is
a dual number in general. Note that we conveniently identify the vector space of
vectorial quaternions $p = p_1\qi + p_2\qj + p_3\qk$ with $\R^3$.

By $\DH[t]$ we denote the ring of polynomials in the indeterminate $t$ and with
coefficients in the dual quaternions. We will use it to parametrize rational
rigid body motions, as in \cite{hegedus13}, whence the indeterminate $t$ serves
as a real motion parameter and thus commutes with all coefficients. A polynomial
in $\DH[t]$ can be written in a unique way as $C(t) = P(t) + \eps D(t)$ where
both, the primal part $P(t)$ and the dual part $D(t)$, are in the ring $\H[t]$
of quaternionic polynomials. Note that we will usually drop the argument $t$,
that is, we just write $C = P + \eps D$.

\begin{definition}
  \label{def:motion-polynomial}
  The polynomial \( C = P + \varepsilon D \in \mathbb{DH}[t] \) is called a
  \emph{motion polynomial} if its norm polynomial $C\cj{C}$ satisfies
  \[
    C\cj{C} = P \cj{P} + \eps (P \cj{D} + D \cj{P}) \in \R[t] \setminus \{0\},
  \]
  that is,
  \[
    P\cj{D} + D\cj{P} = 0 \quad\text{and}\quad P \neq 0.
  \]
  The first condition in this equation is called the \emph{Study condition.}
\end{definition}

Motion polynomials parametrize \emph{rational rigid body motions} by a suitable
action on points, but also by an induced action on lines and planes. The
trajectory of the point $(x,y,z)$ is, for example, given by
\begin{equation}
  \label{eq:1}
  1 + \eps(x\qi + y\qj + z\qk) \mapsto
  \frac{C_\eps (1 + \eps(x\qi + y\qj + z\qk)) \cj{C}}
       {C\cj{C}}.
\end{equation}
Rational motions are characterized by having rational point trajectories
(rational curves) but they also have rational line trajectories (rational ruled
surfaces).

\subsection{Plücker Coordinates and Rational Ruled Surfaces}

A line in space is represented by its \emph{Plücker vector.} We view it as a
vectorial dual quaternion
\[
  L = \vecLp + \eps \vecLd,
\]
where $\vecLp \in \R^3$ is a direction vector and $\vecLd \in \R^3$ is the
corresponding moment vector. The Plücker condition
\[
  \vecLp \cjvecLd + \vecLd \cjvecLp = 0
\]
ensures geometric validity. Note that we don't generally require $\vecLp$ to be
normalized; Plücker coordinates in this article are homogeneous.

\begin{definition}
  A polynomial $L = \vecLp + \eps\vecLd \in \DH[t]$ is called a \emph{line
    polynomial} if it satisfies
\[
  L - \cj{L} = 0,\quad
  \vecLp \cjvecLd +\vecLd \cjvecLp = 0,\quad
  \vecLp \neq 0.
\]
\end{definition}

Line polynomials are special motion polynomials and describe line symmetric
motions \cite{hamann11}. Here, we rather view a line polynomial as a
parameterization of a ruled surface in Plücker coordinates. The vectorial
quaternionic polynomials $\vecLp$, $\vecLd \in \H[t]$ represent the directions
and moments of the rulings, respectively. The defining conditions ensure that
$L$ is a valid Plücker vector for almost all values of $t \in \R$. At isolated
instances it is possible that the primal part vanishes. If the dual part is
non-zero, the corresponding line is at infinity. Since Plücker coordinates are
homogeneous, the ruled surface itself is a \emph{rational} ruled surface.

One way of generating a rational ruled surface is by acting on a fixed line in
the moving space, say $\qk$, with a motion polynomial $C = P + \eps D \in
\DH[t]$. This action is given by the formula
\begin{equation}
  \label{eq:2}
  L = \ej{C} \qk \cej{C},
\end{equation}
cf. \cite{radavelli14}.\footnote{Note that the formula in \cite{radavelli14}
  comes without $\eps$-conjugation. The reasons for this is a slightly different
  convention for the action \eqref{eq:1} of dual quaternions on points. Formulas
  of this paper can be adapted to the convention of \cite{radavelli14} by
  changing the sign of the dual part~$D$.} However, not every rational ruled
surface can be generated in this way. Re-writing \eqref{eq:2} in terms of primal
and dual parts, we obtain
\[
  L = \vecLp + \eps\vecLd = P\qk\cj{P} - \eps(P\qk\cj{D} + D\qk\cj{P}).
\]
Observing that $P\cj{P} = \cj{P}P$ is real and hence commutes with everything,
we see that the norm polynomial equals
\[
  L\cj{L} = \vecLp\cjvecLp = (P\qk\cj{P})\cj{(P\qk\cj{P})} =
  P\qk\cj{P}P\qk\cj{P} = (P\cj{P})^2.
\]
It is thus the \emph{square} of the norm of the primal part of $C$. This
motivates the next definition:

\begin{definition}
  We call the rational ruled surface $L = \vecLp + \eps\vecLd$ \emph{kinematic,}
  if its norm $L\cj{L} = \vecLp\cjvecLp$ is a square in~$\R[t]$.
\end{definition}

\begin{example}
  One family of rulings of a hyperbolic of revolution constitutes a kinematic
  rational ruled surfaces. It is obtained by acting on a fixed line with a
  generic linear motion polynomial $C = c_1t + c_0 \in \DH[t]$ which describes
  the composition of all rotations around a fixed axis with a fixed rigid body
  displacement in the moving or the fixed frame. A family of rulings on a
  general hyperboloid is a rational ruled surface but not kinematic.
\end{example}

\subsection{Saturated Kinematic Rational Ruled Surfaces}

Only kinematic rational ruled surfaces can possibly arise as line trajectories
under rational motions. This necessary condition is almost sufficient -- an
observation that is actually well-known for the respective primal parts. The
precise statement is

\begin{lemma}
  \label{lem:1}
  Given a vectorial polynomial $\vecLp \in \H[t]$ whose norm is a square,
  there exists a polynomial $\ell \in \R[t]$ with only real roots of
  multiplicity one and $P \in \H[t]$ such that $\vecLp\ell = P\qk\cj{P}$.
\end{lemma}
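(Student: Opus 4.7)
The plan is to reduce the quaternionic equation $\vecLp\ell = P\qk\cj{P}$ to a pair of sum-of-squares problems in $\R[t]$, following the strategy used for spatial Pythagorean hodograph polynomials. Write $\vecLp = a\qi + b\qj + c\qk$ with $a,b,c \in \R[t]$ and fix a nonnegative $n \in \R[t]$ with $n^2 = a^2 + b^2 + c^2$, which exists by hypothesis and is unique up to sign. Parameterizing a candidate $P = u + v\qi + w\qj + x\qk$, a direct expansion yields
\[
  P\qk\cj{P} = 2(uw+vx)\qi + 2(wx-uv)\qj + (u^2-v^2-w^2+x^2)\qk
\]
and $P\cj{P} = u^2+v^2+w^2+x^2$. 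Taking norms in the target equation forces $P\cj{P} = n\ell$ up to sign; combined with the $\qk$-component this reduces the problem to the component system $(n+c)\ell = 2(u^2+x^2)$ and $(n-c)\ell = 2(v^2+w^2)$, together with cross-term constraints $a\ell = 2(uw+vx)$ and $b\ell = 2(wx-uv)$.

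To identify $\ell$, I would invoke the classical fact that a nonnegative polynomial in $\R[t]$ is a sum of two squares if and only if every real root has even multiplicity. The product $(n+c)(n-c) = a^2 + b^2$ is a sum of two squares, so its real roots all have even multiplicity. Consequently, a real number is a root of odd multiplicity of $(n+c)$ if and only if it is a root of odd multiplicity of $(n-c)$. Let $\ell$ be (a constant multiple of) the squarefree product of such common roots, so that both $(n+c)\ell$ and $(n-c)\ell$ have only even-multiplicity real roots and are therefore sums of two squares in $\R[t]$. This produces $u,x,v,w \in \R[t]$ satisfying the two norm-type equations, and $\ell$ has only simple real roots by construction.

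The final task is to show that the sum-of-two-squares decompositions can be chosen so as to also satisfy the cross-term equations. I would exploit the standard correspondence between real sum-of-two-squares decompositions and conjugate-pair factorizations in $\C[t]$: a decomposition $(n+c)\ell/2 = |u+ix|^2$ corresponds to selecting one representative from each complex-irreducible conjugate pair of $(n+c)\ell$, and similarly for $(n-c)\ell/2 = |w+iv|^2$. A short computation shows
\[
  (u+ix)(w-iv) = (uw+vx) + i(wx-uv),
\]
whose squared modulus equals $(a^2+b^2)\ell^2/4$ and so matches that of $(a\ell + ib\ell)/2$. The freedom to swap conjugates in the factorizations of $(n+c)\ell$ and $(n-c)\ell$ independently then allows us to arrange $(u+ix)(w-iv) = (a\ell + ib\ell)/2$, which encodes both cross-term equations simultaneously. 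The main obstacle is precisely this compatibility step---showing that the required combinatorial assignment of complex factors is always achievable---but this is a standard technique in the theory of Pythagorean hodograph curves, cf.~\cite{choi02,schroecker24}, which I would ultimately invoke to conclude.
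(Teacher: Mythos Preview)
Your argument is essentially correct and constitutes a direct, component-wise proof in the spatial Pythagorean-hodograph style. One small slip: you cannot in general choose $n \in \R[t]$ to be \emph{nonnegative}; the hypothesis only says $a^2+b^2+c^2$ is a square in $\R[t]$, so $n$ may change sign (as in the paper's own example, where $n$ carries the factor $t-2$). This does not affect your parity analysis of the real roots of $n\pm c$, but you should add one line arguing that the sign of $\ell$ can be chosen so that $(n+c)\ell$ and $(n-c)\ell$ are actually nonnegative---``only even-multiplicity real roots'' gives constant sign, not automatically the right sign for a sum of two squares.

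The paper itself does not prove the lemma in the text; it cites \cite[Lemma~2]{derin25} and illustrates the method by example. That route is more quaternionic and higher-level: extract $g = \rgcd(\vecLp)$, invoke the known factorization result \cite{choi02,schroecker24} as a black box to write the reduced polynomial as $\vecLp/g = Q\qk\cj{Q}$, and then redistribute the factors of $g$ one at a time---an irreducible real quadratic becomes a linear right factor of $P$ of the form $t - \alpha - \beta\qk$, a squared linear factor $(t-r)^2$ becomes $(t-r)$, and each linear factor of odd multiplicity contributes one copy to~$\ell$.

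Your approach effectively opens that black box: instead of citing the quaternionic factorization for the reduced part, you rebuild it from complex factorization and sums of two squares, and your $\ell$ emerges from the parity of the real roots of $n\pm c$ rather than from the odd linear factors of $\rgcd(\vecLp)$. The paper's route is shorter and makes the structure (and minimality) of $\ell$ more transparent; yours is more self-contained and makes the connection to classical real/complex polynomial algebra explicit. Both ultimately lean on the same machinery you cite in your final paragraph, so your deferral there is no worse than the paper's own.
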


A formal proof of this statement can be found in \cite[Lemma~2]{derin25}. Here
we just illustrate the basic ideas at hand of an example.

\begin{example}
  We consider the vectorial quaternionic polynomial
  \[
    ((-2t^3+8t^2+8t)\qi + (2t^3+8t^2-8t)\qj + (t^4+2t^2-8)\qk)(t^2+1)(t-1)^2(t-2).
  \]
  Its norm
  \begin{equation*}
\vecLp\cj{\vecLp} = (t^2+4)^2(t^2+2)^2(t^2+1)^2(t-1)^4(t-2)^2
  \end{equation*}
  is indeed a square. We denote by $\rgcd(\vecLp)$ the greatest real common
  divisor of its quaternionic coefficients $\qi$, $\qj$, $\qk$. In this example
  it equals $g \coloneqq (t^2+1)(t-1)^2(t-2)$. Consider now the vectorial
  polynomial
\[
    \vecLp/g = -2t(t^2-4t-4)\qi + 2t(t^2+4t-4)\qj + (t^2+4)(t^2-2)\qk.
  \]
  at first. It is well-known that $\rgcd(\vecLp/g) = 1$ implies existence of
  $Q \in \H[t]$ such that $\vecLp/g = Q\qk\cj{Q}$. Algorithms to compute $Q$
  are given in \cite[Theorem~4.2]{choi02} or \cite[Lemma~2.3]{schroecker24}. We
  find
  \[
    Q = t^2 -(t + 2)\qi - (t - 2)\qj - 2t \qk
  \]
  and immediately verify $Q\qk\cj{Q} = \vecLp/g$. Setting $P \coloneqq
  Q(t-\qk)(t-1)$ we now obtain $Pk\cj{P} = \vecLp/(t-2)$ which is almost what
  we want. But there is no way to generate the missing linear factor $t-2$.
  Therefore, we set $\ell = t-2$ and $P \coloneqq Q(t-\qk)(t-1)(t-2)$ so that
  \[
    \vecLp\ell = P\qk\cj{P}.
  \]
\end{example}

This example demonstrates that real linear factors of $\vecLp$ whose
multiplicity is odd cannot be generated by any $P \in \H[t]$. There is, however,
a unique monic polynomial $\ell$ of minimal degree such that $\vecLp\ell =
P\qk\cj{P}$ for some $P \in \H[t]$. We therefore define

\begin{definition}
  Given a vectorial polynomial $\vecLp$, we denote the unique monic polynomial
  $\ell$ of minimal degree such that $\vecLp\ell = P\qk\cj{P}$ for some $P \in
  \H[t]$ the \emph{minimal saturating factor} of $\vecLp$. Moreover, we say that
  the rational ruled surface $L = \vecLp + \eps\vecLd$ is \emph{saturated} if
  $\ell = 1$.
\end{definition}

Note that $\ell$ is a factor of $\rgcd(P)$, that is, $P$ can be written as $P =
Q\ell$ for some $Q \in \H[t]$.

\section{Existence of Polynomial Solutions}
\label{sec:existence}

Given a kinematic rational ruled surface \(L = \vecLp + \eps \vecLd\) we
want to find a rational motion $C = P + \eps D \in \DH[t]$ such that $L$ is the
trajectory of a fixed line in the moving frame. Assuming that this moving line
is $\qk$, this amounts to solving the equation
\begin{equation}
  \label{eq:3}
  \ej{C} \qk \cej{C} = hL
\end{equation}
where \(h \in \R[t]\) is some real polynomial that accounts for us using
homogeneous coordinates. As we already saw previously, the equation $\ej{C} \qk
\cej{C} = L$ might have no solutions so that $h \neq 1$ might be necessary to
allow for solutions. One instance of this is $L$ not being saturated but this is
not the only case that requires a co-factor $h$ of positive degree.

By comparing primal and dual parts and considering the Study condition we
convert \eqref{eq:3} in the equivalent system of equations
\begin{align}
  P \qk \cj{P} &= h\vecLp, \label{eq:4}\\
  -P \qk \cj{D} - D \qk \cj{P} &= h\vecLd, \label{eq:5}\\
  P \cj{D} + D \cj{P} &= 0 \label{eq:6}
\end{align}
that is to be solved for $P$ and $D$. The system of
Equations~\eqref{eq:4}--\eqref{eq:6} is quite similar to
\cite[Equations~(10)--(12)]{derin25} but there are also some important
differences:
\begin{itemize}
\item Equation~\eqref{eq:4} is just the same as Equation~(10) of \cite{derin25}
  with \(\vec{u}\) replaced by \(\vecLp\). We can use the well-known
  procedures \cite{choi02,schroecker24} for its solution. Solutions are unique
  up to right-multiplication with quaternions $q_0 + q_3\qk$ of unit norm, that
  is, fixed rotations around $\qk$ in the moving frame.
\item The left-hand side of \eqref{eq:5} differs from Equation~(11) of
  \cite{derin25} by one minus sign. As a consequence, it is vectorial while
  \cite[Equation~(11)]{derin25} is a scalar equation. In particular,
  \eqref{eq:5} imposes more constraints than the corresponding equation in
  \cite{derin25}. It is thus to be expected that finding solutions is more
  challenging.
\item In contrast to \cite{derin25}, the Study condition \eqref{eq:6} is not
  needed here as it is implied by the Plücker condition satisfied by $L$. We
  prove this formally in the next lemma. It will allow us to henceforth ignore
  Equation~\eqref{eq:6}.
\end{itemize}

\begin{lemma}
  \label{lem:2}
  Equations~\eqref{eq:4} and \eqref{eq:5} together with the Plücker condition
  \(\vecLp\cjvecLd + \vecLd\cjvecLp = 0\) imply~\eqref{eq:6}.
\end{lemma}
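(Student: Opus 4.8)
The plan is to show that the quantity $S \coloneqq P\cj{D} + D\cj{P} \in \H[t]$ — which is actually \emph{real}-valued, since $\cj{S} = S$ — vanishes, by exploiting that we already know a great deal about $P$ from \eqref{eq:3}. First I would record that \eqref{eq:3} forces $P\qk\cj{P} = h\vecLp$ to be vectorial, and taking norms gives $(P\cj{P})^2 = h^2\,\vecLp\cjvecLp$, so in particular $P\cj{P}$ divides nothing problematic and, more importantly, $P \neq 0$, so $P$ is invertible in the quotient field $\H(t)$. The natural move is therefore to multiply the unknown relation $S = 0$ on the left by $\cj{P}$ and on the right by $P$ (or a similar sandwiching) to bring in the known products $P\qk\cj{P}$.

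The key computation I expect to carry out: from \eqref{eq:3} we have $P\qk\cj{P} = h\vecLp$; conjugating, $P\cj{\qk}\cj{P} = -P\qk\cj{P} = -h\vecLp$ as well, consistent with vectoriality. Now consider $\cj{P}\cdot(\text{LHS of }\eqref{eq:4})\cdot P$, i.e. $-\cj{P}(P\qk\cj{D} + D\qk\cj{P})P = h\,\cj{P}\vecLd P$. Using $P\cj{P} = \cj{P}P \in \R[t]$ this rearranges to $-(P\cj{P})(\qk\cj{D}P + \cj{P}D\qk) = h\,\cj{P}\vecLd P$. Separately, the Study-type quantity $S = P\cj{D}+D\cj{P}$ relates to $\qk\cj{D}P + \cj{P}D\qk$ after inserting $\qk^2 = -1$ and using that $P\qk\cj{P}$ is a known vector; the idea is that $\qk\cj{D}P + \cj{P}D\qk = \qk(\cj{D}P + \cj{P}D)\qk^{-1}\cdot(\pm 1) + (\text{commutator terms involving } P\qk\cj{P})$, and $\cj{D}P + \cj{P}D = \cj{S}$ up to conjugation. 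The plan is to massage these two identities until $S$ appears multiplied by the real nonzero polynomial $P\cj{P}$, while the right-hand sides, when combined with the Plücker condition $\vecLp\cjvecLd + \vecLd\cjvecLp = 0$, collapse to zero. Concretely, pairing \eqref{eq:4} with its own conjugate and using \eqref{eq:3} should produce $h\bigl(\vecLp\cjvecLd + \vecLd\cjvecLp\bigr) = (P\cj{P})\,S$ (or $2(P\cj{P})S$, up to sign), at which point the Plücker condition gives $(P\cj{P})S = 0$ in $\H[t]$, and since $P\cj{P} \in \R[t]\setminus\{0\}$ we conclude $S = 0$, which is \eqref{eq:5}.

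The main obstacle will be the bookkeeping in the step where the left-hand side of \eqref{eq:4} is combined with its conjugate: one must be careful that $\qk$ does \emph{not} commute with $P$, $D$, so the manipulations have to go through $P\qk\cj{P}$, $D\qk\cj{P}$ and their conjugates rather than trying to ``cancel'' $\qk$ directly. I would handle this by never moving $\qk$ past a quaternion in isolation — instead always keeping the grouped products $P\qk\cj{D}$, $D\qk\cj{P}$ intact, conjugating \eqref{eq:4} to get $-D\qk\cj{P} - P\qk\cj{D} = h\cjvecLd$ written with $\cj{(P\qk\cj{D})} = D\cj{\qk}\cj{P} = -D\qk\cj{P}$, and then left-multiplying one copy by $\cj{P}$, right-multiplying the other by $P$, and adding. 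The real-valuedness of $P\cj{P}$ and of $S$, plus the Plücker identity, should then do the rest; I would close by remarking that this is exactly the point promised in the third bullet above, so \eqref{eq:5} may be dropped from the system.
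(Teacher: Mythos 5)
Your approach is correct but takes a genuinely different route from the paper. The paper's proof is a brute-force coordinate computation: it expands $P$, $D$, $\vecLp$, $\vecLd$ in components, reads off \eqref{eq:6}--\eqref{eq:9}, and verifies the polynomial identity $l_1l_5 + l_2l_6 + l_3l_7 = -4(p_0^2+p_1^2+p_2^2+p_3^2)(d_0p_0+d_1p_1+d_2p_2+d_3p_3)$, after which the Plücker condition and $P\neq 0$ kill the Study scalar. Your plan works coordinate-free: substitute $h\vecLp = P\qk\cj{P}$ and $h\vecLd = -(P\qk\cj{D}+D\qk\cj{P})$ into the Plücker bilinear form and collapse using the centrality of $P\cj{P}$. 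To close the argument cleanly you should make one tacit step explicit: after pulling $P\cj{P}$ out, the remaining piece is the conjugated term $P\qk(\cj{P}D + \cj{D}P)\qk\cj{P}$, and you need the identity $\cj{P}D + \cj{D}P = P\cj{D} + D\cj{P} = S$ (both sides equal twice the real $\R^4$ inner product of $P$ and $D$, so they coincide as real scalars); then $P\qk S\qk\cj{P} = -S\,P\cj{P}$, and the two contributions add rather than cancel, giving $h^2(\vecLp\cjvecLd + \vecLd\cjvecLp) = -2(P\cj{P})\,S$. From there the Plücker condition and $P\cj{P}\in\R[t]\setminus\{0\}$ force $S=0$, which is \eqref{eq:5}. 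Your version is shorter and more structural than the paper's coordinate check and would carry over to variants of the setup, at the cost of the non-commutativity bookkeeping you correctly flagged as the main obstacle.
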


\begin{proof}
  We compute
  \[
    L = \ej{C}\qk\cej{C} = (P - \eps D) \qk (\cj{P} - \eps\cj{D})
    = P\qk\cj{P} - \eps(P\qk\cj{D}+D\qk\cj{P}).
  \]
  With
  \begin{gather*}
    \vecLp = l_1\qi + l_2\qj + l_3\qk,\quad
    \vecLd = l_5\qi + l_6\qj + l_7\qk,\quad
    P = p_0 + p_1\qi + p_2\qj + p_3\qk\\
    \text{and}\quad
    D = d_0 + d_1\qi + d_2\qj + d_3\qk
  \end{gather*}
  we obtain
  \begin{equation}
    \label{eq:7}
    l_1 = 2(p_0p_2+p_1p_3),\quad
    l_2 = 2(-p_0p1+p_2p_3),\quad
    l_3 = p_0^2-p_1^2-p_2^2+p_3^2
  \end{equation}
  as well as
  \begin{align}
    l_5 &= -2(d_0p_2+d_1p_3+d_2p_0+d_3p_1), \label{eq:8}\\
    l_6 &= -2(-d_0p_1-d_1p_0+d_2p_3+d_3p_2), \label{eq:9}\\
    l_7 &= -2(d_0p_0-d_1p_1+d_2p_2+d_3p_3). \label{eq:10}
  \end{align}
  This implies
  \begin{equation}
    \label{eq:11}
    l_1l_5 + l_2l_6 + l_3l_7 = -4(p_0^2+p_1^2+p_2^2+p_3^2)(d_0p_0 + d_1p_1 + d_2p_2 + d_3p_3).
  \end{equation}
  The left-hand side is the Plücker condition and vanishes. Because of \(P \neq
  0\), the last factor on the right-hand side vanishes as well and this
  implies~\eqref{eq:6}.
\end{proof}

We now turn to a solution of the remaining Equations~\eqref{eq:4} and
\eqref{eq:5}. In doing so, we always start with a solution $P = Qh\ell$ of
\eqref{eq:4} for the primal part, which exists by \cite[Lemma~2]{derin25}. We

For given $P = Qh\ell$ we need to solve \eqref{eq:5} for the dual part $D$. For
fixed degree $\deg D$, this amounts to solving a system of linear equations but
this viewpoint, although convenient for quick calculations, does not provide
insight on existence, uniqueness, or minimality of solutions.

The next sequence of lemmas will result in our main existence result,
Theorem~\ref{th:1} below. Our proofs are constructive and yield formulas for
computing the dual part $D$ and hence also the motion polynomial $C = P + \eps
D$. Later, we will prove that also the solutions of minimal degree are obtained
in this way. While the structure of our proofs of \cite[Lemma~3]{derin25} and of
the following sequence of lemmas are quite similar, there are important
also differences so that a detailed description is justified.

\begin{lemma}
  \label{lem:3}
  Given a solution $P = Q h\ell$ of \eqref{eq:4}, all solutions $D = d_0 +
  d_1\qi + d_2\qj + d_3\qk$ of \eqref{eq:5} \emph{over the field of rational
    functions} are given by
  \begin{align}
    d_1 &= -\frac{2d_0q_2}{2q_3}
          +\frac{(k_5(q_2^2-q_3^2)+k_6(q_0q_3-q_1q_2))}{2q_3m_3},\label{eq:12} \\
    d_2 &= \phantom{-}\frac{2d_0q_1}{2q_3}
          +\frac{(k_5(-q_0q_3-q_1q_2)+k_6(q_1^2-q_3^2))}{2q_3m_3},\label{eq:13}\\
    d_3 &= -\frac{2d_0q_0}{2q_3} +\frac{(k_5(q_1q_3+q_0q_2)+k_6(q_2q_3-q_0q_1))}{2q_3m_3}.\label{eq:14}
  \end{align}
  where $Q = q_0 + q_1\qi + q_2\qj + q_3\qk$, $M = m_1\qi + m_2\qj + m_3\qk =
  Q\qk\cj{Q}$, $k_5\qi + k_6\qj + k_7\qk = \vecKd \coloneqq \vecLd / \ell$ and
  $d_0$ is an arbitrary rational function.\footnote{Validity of
    Equations~\eqref{eq:12}--\eqref{eq:14} requires $m_3 \neq 0$ and $q_3 \neq
    0$ but both of these assumptions can be made without loss of generality, cf.
    Remark~\ref{rem:1}.}
\end{lemma}

\begin{proof}
  The proof consists of a straightforward calculation plus some simplifications
  using the Study/Plücker condition. We substitute $P = Qh\ell$ and \(\vecLd =
  \ell\vecKd\) into \eqref{eq:5} to obtain
  \begin{equation}
    \label{eq:15}
    -Q \qk \cj{D} - D \qk \cj{Q} = \vecKd.
  \end{equation}
  A direct computation provides the following solution of Equation~\eqref{eq:15}
  over the field of rational functions:
  \begin{align}
    d_1 &= -\frac{1}{{2}{q_3}{\chi}} (2d_0q_2\chi +k_5(q_2^2+q_3^2)-k_6(q_0q_3+q_1q_2)+k_7(q_0q_2-q_1q_3)),\label{eq:16} \\
    d_2 &= \frac{1}{2q_3\chi} (2d_0q_1\chi -k_6(q_1^2+q_3^2)+k_5(-q_0q_3+q_1q_2)+k_7(q_0q_1+q_2q_3)),\label{eq:17}\\
    d_3 &= -\frac{1}{2q_3\chi} (2d_0q_0\chi +k_7(q_0^2+q_3^2)+k_5(q_0q_2+q_1q_3)+k_6(q_2q_3-q_0q_1))\label{eq:18}
  \end{align}
  where $\chi \coloneqq q_0^2+q_1^2+q_2^2+q_3^2$. By the Plücker condition on
  $K$ we observe
  \begin{equation}
    \label{eq:19}
    m_1k_5 + m_2k_6 + m_3k_7 = 0.
  \end{equation}
  We substitute \(k_7 = -(m_1k_5 + m_2k_6)/m_3 \in \R[t]\) into
  \eqref{eq:16}--\eqref{eq:18} and, using
  \begin{equation}
    \label{eq:20}
    m_1 = 2(q_0 q_2 + q_1 q_3),\
    m_2 = 2(-q_0 q_1 + q_2 q_3),\
    m_3 = q_0^2  - q_1^2  - q_2^2  + q_3^2,
  \end{equation}
  we rearrange to obtain Equations~\eqref{eq:12}, \eqref{eq:13}, and
  \eqref{eq:14}.
\end{proof}

\begin{remark}
  \label{rem:1}
  Equations~\eqref{eq:12}--\eqref{eq:14} provide the most general
  \emph{rational} solution to \eqref{eq:5} provided that $m_3 \neq 0$ and $q_3
  \neq 0$. Should these assumptions be violated, we can perform a change of
  coordinates. For example, a half-turn in the moving frame $(1,0,1)$ will
  interchange $m_1$ and $m_3$ as well as $q_1$ and $q_3$. Should $q_1 = q_2 =
  q_3 = 0$, a random rotation in the moving frame will do. We will actually use
  this technique later in Section~\ref{sec:cylindroid}. Of course, there also
  exist formulas similar to \eqref{eq:12}--\eqref{eq:14} and
  \eqref{eq:16}--\eqref{eq:18} that assume the non-vanishing of other
  coefficients of $M$ and~$Q$.
\end{remark}

\begin{remark}
  \label{rem:2}
  In the proof of Lemma~\ref{lem:3} we substitute a rational expression for
  $k_7$ but -- because $k_7$ is polynomial -- the common denominator of the
  expressions in \eqref{eq:12}, \eqref{eq:13} and \eqref{eq:14} in reduced form
  is at most $q_3$, not $q_3m_3$. This statement is easy to verify for $d_3$.
  Using \eqref{eq:19} and \eqref{eq:20} we can immediately simplify
  \eqref{eq:14} to
  \begin{equation}
    \label{eq:21}
    d_3 = -\frac{2d_0q_0 + \frac{1}{2}k_7}{2q_3}.
  \end{equation}
\end{remark}

The next lemma provides us with a \emph{polynomial} solution to
Equation~\eqref{eq:5}.

\begin{lemma}
  \label{lem:4}
  If $\gcd(q_0,q_3) = 1$ then, by Bézout's identity, there exist polynomials
  $a$, $b \in \R[t]$ such that $aq_0 + bq_3 = 1$. In this case, the solution
  \eqref{eq:12}--\eqref{eq:14} to Equation~\eqref{eq:5} is polynomial if
  \begin{equation}
    \label{eq:22}
    d_0 = -\frac{1}{4}k_7a = -\frac{k_7(1-bq_3)}{4q_0}.
  \end{equation}
\end{lemma}

\begin{remark}
  The assumption $\gcd(q_0,q_3) = 1$ can again be made without loss of
  generality. The polynomials $a$ and $b$ are easy to compute by means of the
  extended Euclidean algorithm and their respective degrees are bounded by the
  degree of~$Q$. Equation~\eqref{eq:22} is motivated by \eqref{eq:21}: In order
  for $d_3$ to be polynomial, the numerator of \eqref{eq:21} should be divisible
  by $q_3$ that is,
  \begin{equation*}
    2d_0q_0 + \frac{1}{2}k_7 \equiv 0 \mod q_3.
  \end{equation*}
  The solution of this equation in the factor ring of $\R[t]/(q_3)$ is given by
  \begin{equation*}
    d_0 = -\frac{1}{4}k_7q_0^{-1} = -\frac{1}{4}k_7a \mod q_3.
  \end{equation*}
\end{remark}

\begin{proof}[Proof of Lemma~\ref{lem:4}]
  Plugging \eqref{eq:22} into \eqref{eq:21}, we find
  \begin{equation}
    \label{eq:23}
    d_3 = -\frac{-k_7(1-bq_3)+k_7}{4q_3}
    = -\frac{bk_7}{4}
  \end{equation}
  which is, indeed, a polynomial.

  We claim that \(d_0\) as in \eqref{eq:22} ensures that also the right-hand
  sides of Equations~\eqref{eq:12} and \eqref{eq:13} are polynomial. To see
  this, we re-write $d_1 = - \frac{1}{2q_3}D_1$ where
  \begin{equation*}
    D_1 = (-2d_0q_2m_3+ k_5(q_2^2-q_3^2)+k_6(q_0q_3-q_1q_2))m_3^{-1}.
  \end{equation*}
  As already argued in Remark~\ref{rem:2}, \(D_1\) is a polynomial in spite of
  the factor \(m_3^{-1}\). We need to show that $q_3$ is a factor of \(D_1\). In
  order to do so, we substitute \eqref{eq:22} to obtain
  \begin{equation*}
    D_1 = (\tfrac{1}{2}k_7aq_2m_3 - k_5(q_2^2-q_3^2)-k_6(q_0q_3-q_1q_2))m_3^{-1}.
  \end{equation*}
  Invoking \eqref{eq:19}, \eqref{eq:20}, and \(aq_0 + bq_3 = 1\) this becomes
  \begin{align}
    D_1 &= (-\tfrac{1}{2}(m_1k_5 + m_2k_6)aq_2 - k_5(q_2^2-q_2^2)-k_6(q_0q_3-q_1q_2))m_3^{-1} \notag\\
        &= \bigl(k_5((-aq_0+1)q_2^2-(aq_1q_2+q_3)q_3) \notag\\
        &\qquad\qquad\qquad + k_6((aq_0-1)q_1q_2-(aq_2^2-q_0)q_3)\bigr)m_3^{-1} \notag\\
        &= (k_5(bq_2^2-aq_1q_2-q_3) + k_6(-bq_1q_2-aq_2^2+q_0))q_3m_3^{-1}\label{eq:24}
  \end{align}
  and it transpires that, indeed, $q_3$ is a factor of $D_1$ and $d_1$ is
  polynomial. A similar computation for $d_2 = \frac{1}{2q_3}D_2$ results in
  \begin{equation}
    \label{eq:25}
    D_2 = ( k_5(-bq_1q_2+aq_1^2-q_0) + k_6(bq_1^2+aq_1q_2-q_3) )q_3m_3^{-1},
  \end{equation}
  whence $d_2$ is polynomial as well.
\end{proof}

We have shown that Equations~\eqref{eq:4}--\eqref{eq:6} admit polynomial
solutions, given by \eqref{eq:12}--\eqref{eq:14} together with \eqref{eq:22}.
Instead of Equation~\eqref{eq:14} we may also use the simpler
Equation~\eqref{eq:21} or even \eqref{eq:23}. However, using these equations, we
will in general obtain a polynomial $D$ with $\deg D > \deg P$. The next lemma
shows that, with a bit more effort, we can achieve $\deg D \le \deg P$. This is
possible because \eqref{eq:12}--\eqref{eq:14} do not describe all polynomial
solutions.

\begin{lemma}
  \label{lem:5}
  If $D = d_0 + d_1\qi + d_2\qj + d_3\qk$ is a polynomial solution to
  \eqref{eq:5}, then so is $D + \lambda Q\qk$ for any $\lambda \in \R[t]$. If
  the original solution $D$ is obtained via \eqref{eq:12}--\eqref{eq:14} together
  with \eqref{eq:22} and
  \begin{equation}
    \label{eq:26}
    \lambda = \frac{d_0 - \varrho}{q_3}
  \end{equation}
  for $\varrho$, $\lambda \in \R[t]$ and $\deg \varrho < \deg q_3$ ($\lambda$
  and $\varrho$ exist and can be computed by polynomial division with remainder)
  then $\deg (D + \lambda Q\qk) \le \deg P$.
\end{lemma}

\begin{remark}
  \label{rem:3}
  If $\deg Q < \deg P$, the polynomial $\lambda$ in Lemma~\ref{lem:5} is not
  unique. Its purpose is to get rid of high powers of $t$ in $D$. Thus, low
  degree coefficients of $\lambda$ are irrelevant and $\lambda$ can be replaced
  by $\lambda + \nu$ where $\nu \in \R[t]$ is an arbitrary polynomial of degree
  $\deg \nu < \deg P - \deg Q$. Even if $\deg Q = \deg P$, we obtain a one
  parameteric set $\{D + \lambda_0 Q\qk \mid \lambda_0 \in \R\}$ of solution
  polynomials. This is to be expected and corresponds to a translation of the
  moving frame in direction of~$\qk$.
\end{remark}

\begin{proof}[Proof of Lemma~\ref{lem:5}]
  The first statement follows from the observation that $D = Q\qk = -q_3 +
  q_2\qi - q_1\qj + q_0\qk$ is a solution of $-Q\qk\cj{D} - D\qk\cj{Q} = 0$
  which is the homogeneous equation to \eqref{eq:15}. As to the second
  statement, we compute the individual quaternion coefficients of $D + \lambda
  Q\qk$:
  \begin{itemize}
  \item From \eqref{eq:22} we find
    \begin{equation*}
      d_0 - \lambda q_3 = -\frac{1}{4}k_7a -
      \frac{d_0 - \varrho}{q_3} q_3 = \varrho
    \end{equation*}
    whence $\deg(d_0 - \lambda q_3) = \deg \varrho < \deg Q \le \deg P$.
  \item From \eqref{eq:12} we find
    \begin{equation*}
      \begin{aligned}
      d_1 + \lambda q_2 &= -\frac{2d_0q_2}{2q_3} +
                          \frac{k_5(q_2^2-q_3^2)+k_6(q_0q_3-q_1q_2)}{2q_3m_3} + \frac{d_0-\varrho}{q_3} q_2 \\
          &= \frac{k_5(q_2^2-q_3^2)+k_6(q_0q_3-q_1q_2)}{2q_3m_3} -
            \frac{\varrho q_2}{q_3} \\
          &= \frac{k_5(q_2^2-q_3^2)+k_6(q_0q_3-q_1q_2) - 2\varrho q_2m_3}{2q_3m_3}.
      \end{aligned}
    \end{equation*}
    This is a polynomial whose degree, because of $\deg \varrho < \deg Q$ and
    $\deg K = \deg M + \deg h + \deg \ell$, is at most
    \begin{multline*}
        \max\{\deg K + \deg Q - \deg M, \deg \varrho\} \\
        = \max\{\deg Q + \deg h + \deg \ell, \deg \varrho\}
        = \deg Q + \deg h + \deg \ell = \deg P.
    \end{multline*}
  \item Similar computations using \eqref{eq:13} and \eqref{eq:14} yield
    \begin{equation*}
      d_2 - \lambda q_1 =
          \frac{(k_5(-q_0q_3-q_1q_2)+k_6(q_1^2-q_3^2))}{2q_3m_3} + \frac{\varrho
            q_1}{q_3}
    \end{equation*}
    and
    \begin{equation*}
      d_3 + \lambda q_0 =
    \frac{(k_5(q_1q_3+q_0q_2)+k_6(q_2q_3-q_0q_1))}{2q_3m_3} - \frac{\varrho q_0}{q_3}.
    \end{equation*}
    It can be seen similarly that the degrees of these expressions are bounded
    by $\deg P$.\qedhere
  \end{itemize}
\end{proof}

Combining all results so far, we can state a central result:

\begin{theorem}
  \label{th:1}
  Given a saturated rational kinematic ruled surface $L = \vecLp + \eps
  \vecLd$ with $h = \rgcd(\vecLp)$, there exists a motion polynomial $C = P
  + \eps D$ of degree $\deg C = \frac{1}{2}(\deg L + \deg h)$ such that
  $\ej{C}\qk\cej{C} = hL$. The dual part $D = d_0 + d_1\qi + d_2\qj + d_3\qk$ is
  given by the formulas referenced in Lemma~\ref{lem:5}.
\end{theorem}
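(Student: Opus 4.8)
The plan is to assemble Theorem~\ref{th:1} directly from the chain of lemmas, since essentially all the work has already been done. First I would invoke Lemma~\ref{lem:1} (via \cite[Lemma~2]{derin25}) to obtain a primal solution $P$ of \eqref{eq:3}. Here saturation means $\ell = 1$, so with $h = \rgcd(\vecLp)$ we may write $P = Qh$ with $Q \in \H[t]$ and $\rgcd(Q) = 1$; in particular $Q\qk\cj{Q} = \vecLp/h$. The degree bookkeeping is immediate: from $P\qk\cj{P} = h\vecLp$ we get $2\deg P = \deg h + \deg \vecLp = \deg h + \deg L$ (using $\deg L = \deg \vecLp$, which holds because $\vecLp \neq 0$ and $L\cj{L} = \vecLp\cjvecLp$), hence $\deg C = \deg P = \tfrac12(\deg L + \deg h)$, which is the claimed degree.

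Next I would produce the dual part. Lemma~\ref{lem:3} gives the general rational solution $D$ of \eqref{eq:4}; Lemma~\ref{lem:4} specializes the free parameter $d_0$ via \eqref{eq:21} to make $D$ polynomial; and Lemma~\ref{lem:5} adds a suitable multiple $\lambda Q\qk$, with $\lambda$ chosen by polynomial division as in \eqref{eq:25}, to bring the degree down to $\deg (D + \lambda Q\qk) \le \deg P$. Setting $C \coloneqq P + \eps(D + \lambda Q\qk)$ we then have $\deg C = \deg P$ as required. That $C$ genuinely satisfies \eqref{eq:2}, i.e. $\ej{C}\qk\cej{C} = hL$, follows by construction: \eqref{eq:3} and \eqref{eq:4} hold by the lemmas, and \eqref{eq:5} — the Study condition, which is exactly what makes $C$ a motion polynomial — is supplied for free by Lemma~\ref{lem:2}, using the Plücker condition satisfied by $L$. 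Since $P \neq 0$, the norm $C\cj{C} = (P\cj{P})^2 \neq 0$, so $C$ is a bona fide motion polynomial.

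One point that deserves an explicit remark rather than being glossed over is the genericity assumptions $m_3 \neq 0$, $q_3 \neq 0$, and $\gcd(q_0, q_3) = 1$ that the formulas in Lemmas~\ref{lem:3}--\ref{lem:5} rely on. As already noted in Remark~\ref{rem:1} and the remark following Lemma~\ref{lem:4}, these can be arranged without loss of generality: $q_3 \neq 0$ and $m_3 \neq 0$ can be achieved by a rotation of the fixed frame (equivalently, permuting the roles of $\qi, \qj, \qk$, with the analogous formulas), and $\gcd(q_0, q_3) = 1$ can be enforced by replacing $Q$ with $Qr$ for a suitable $r \in \R[t]$ or again by a coordinate change. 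I would state this briefly and point back to those remarks.

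The proof therefore has no real obstacle left — it is a matter of stitching the lemmas together in the right order and doing the degree count cleanly. The one place I would be slightly careful is precisely this degree count: making sure the identity $\deg P = \deg Q + \deg h$ is justified (it holds because $\rgcd(Q) = 1$ forces $\deg(Q\qk\cj{Q}) = 2\deg Q$, matching $\deg \vecLp = \deg(\vecLp/h) + \deg h$), and confirming that the degree bound $\deg(D + \lambda Q\qk) \le \deg P$ from Lemma~\ref{lem:5} is exactly $\deg P$ (not strictly less), which is consistent with the motion polynomial $C$ having degree $\deg C = \deg P$ and not collapsing to something smaller. With saturation giving $\ell = 1$, every appearance of $\ell$ in the referenced formulas drops out, which further simplifies the presentation.
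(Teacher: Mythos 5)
Your assembly of Theorem~\ref{th:1} from Lemmas~\ref{lem:1}--\ref{lem:5} is structurally exactly what the paper does (the paper simply says ``combining all results so far'' and relies on the preceding lemmas), and the observations about $\ell = 1$ under saturation, about the Study condition being supplied by Lemma~\ref{lem:2}, and about the genericity assumptions being removable by a coordinate change are all correct and well placed.

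There is, however, one incorrect justification in your degree bookkeeping. You assert that $\deg L = \deg \vecLp$ ``holds because $\vecLp \neq 0$ and $L\cj{L} = \vecLp\cjvecLp$.'' This inference is false. The identity $L\cj{L} = \vecLp\cjvecLp$ is precisely the statement that the dual part of the norm vanishes (the Plücker condition); it places no constraint on $\deg\vecLd$. Concretely, $L = \qi + \eps\,t\qj$ satisfies $\vecLp \neq 0$, the Plücker condition, and $L\cj{L} = 1 = \vecLp\cjvecLp$, yet $\deg L = 1 > 0 = \deg\vecLp$. So $\deg\vecLd \le \deg\vecLp$ is genuinely an additional hypothesis, not a consequence of the line-polynomial conditions. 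In fact the paper itself uses this implicitly: the proof of Lemma~\ref{lem:5} relies on the degree identity $\deg K = \deg M + \deg h + \deg\ell$, which unwinds to $\deg\vecLd = \deg\vecLp$. So your proof shares an unstated assumption with the paper rather than introducing a new error, but your parenthetical ``proof'' of it should be struck and replaced by an explicit hypothesis (or by an argument that excludes the degenerate case of a ruling at infinity as $t \to \infty$, which is what $\deg\vecLd > \deg\vecLp$ geometrically amounts to). Your final paragraph's concern about whether the degree could ``collapse'' is also resolved only under this same assumption: Lemma~\ref{lem:5} gives $\deg D \le \deg P$ while the primal part fixes $\deg P = \tfrac12(\deg\vecLp + \deg h)$ exactly, so $\deg C = \deg P$; matching this to $\tfrac12(\deg L + \deg h)$ is again exactly the $\deg L = \deg\vecLp$ question.
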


In case of $h = 1$, the motion polynomial $C$ is clearly of minimal possible
degree as already the primal part is of minimal degree. We will see later that
minimality is also achieved in case of $\deg h > 0$ but, before doing so, let us
illustrate the computation procedure at hand of an example.

\begin{example}
	\label{ex:1}
  We examine the reduced kinematic line polynomial
  \(K = \vecKp + \eps \vecKd\) where
  \begin{multline*}
    \vecKp=(t^2-6t+10)((2t^3-4t^2+2t-14)\qi\\
    -(2t^3-6t^2+4t+8)\qj+(t^4+t^2+14t-8)\qk),
  \end{multline*}
  and
  \begin{multline*}
    \vecKd=k_5\qi + k_6\qj + k_7\qk \\
    = 112t\qi +(t^6-51t^2+42t-104)\qj +(2t^5-6t^4+2t^3-14t^2-4t+104)\qk.
  \end{multline*}
  The minimal saturating factor is \(\ell = 1\) whence \(L=K\). Moreover, \(h =
  \rgcd(\vecLp) = \rgcd(\vecKp) = t^2-6t+10\).

  In a first step we compute $Q = q_0 + q_1\qi + q_2\qj + q_3\qk \in
  \H[t]$ such that $Q\qk\cj{Q} = M \coloneqq \vecKp/(h\ell) =
  \vecKp/h$:
	\[
    Q=t^2+1+(t-2)\qi+ (t-3)\qj +(t+2) \qk.
  \]
  This already gives us the primal part \(P\) of the sought motion polynomial
  \(C = P + \eps D\) as
  \[
    P = Qh\ell = Qh = (t^2-6t+10)(t^2+1+(t-2)\qi+ (t-3)\qj +(t+2) \qk).
  \]

	To determine the dual part $D=d_0 + d_1\qi + d_2\qj + d_3\qk$, we use the
  extended Euclidean algorithm to compute
  \[
    a = \tfrac{1}{5},\quad
    b = -\tfrac{1}{5}(t-2)
  \]
  such that \(aq_0 + bq_3 = 1\). By Equations~\eqref{eq:12}, \eqref{eq:13},
  \eqref{eq:14}, and \eqref{eq:22} we now have
  \begin{equation*}
    \begin{aligned}
      d_0 &= -\tfrac{1}{10}(t^5-3t^4+t^3-7t^2-2t+52), \\
      d_1 &= \phantom{-}\tfrac{1}{10}(t^5-3t^4+21t^3-27t^2-22t-208), \\
      d_2 &= -\tfrac{1}{10}(t^5-7t^4+21t^3-11t^2+34t-52), \\
      d_3 &= \phantom{-}\tfrac{1}{10}(t^5-3t^4+t^3-7t^2-2t+52)(t-2).
    \end{aligned}
  \end{equation*}
  We see that $\deg D = 6 > 4 = \deg P$. Following Lemma~\ref{lem:5} we use
  polynomial division to compute
  \begin{equation}
    \label{eq:27}
    \lambda = -\tfrac{1}{10}(t^4-5t^3+11t^2-29t+56),\quad \varrho = 6.
  \end{equation}
  such that $d_0 = q_3\lambda + \varrho$. Replacing $D$ with $D + \lambda
  Q\qk$ we find
  \begin{equation*}
    \begin{aligned}
      d_0 \leadsto d_0 - \lambda q_3 &= \phantom{-}6, \\
      d_1 \leadsto d_1 + \lambda q_2 &= \phantom{-}\tfrac{1}{2}(t^4-t^3+7t^2-33t-8), \\
      d_2 \leadsto d_2 - \lambda q_1 &= -2(2t^2-4t+3), \\
      d_3 \leadsto d_3 + \lambda q_0 &= -\tfrac{1}{2}(t^4-5t^3+11t^2-17t+32)
    \end{aligned}
  \end{equation*}
  so that, indeed, $\deg D = 4 = \deg P$ and we obtain the solution
  \begin{multline}
    \label{eq:28}
    C =
    (t^2-6t+10)(t^2+1+(t-2)\qi+ (t-3)\qj +(t+2) \qk)\\
    + \tfrac{1}{2}\eps\bigl(
    12 +(t^4-t^3+7t^2-33t-8)\qi
    -(8t^2-16t+12)\qj
    -(t^4-5t^3+11t^2-17t+32)\qk
    \bigr).
  \end{multline}
  It satisfies $C\cj{C} \in \R[t]$ and $\ej{C}\qk\cj{\ej{C}} = hK = hL$ with $h
  = t^2 - 6t + 10$.

  Note that the low degree coefficients of $\lambda$ are irrelevant and we could
  use any $\lambda = -\frac{1}{10}(t^4-5t^3 + \lambda_2t^2 + \lambda_1t +
  \lambda_0)$ with arbitrary $\lambda_2$, $\lambda_1$, $\lambda_0$ instead of
  \eqref{eq:27} (cf. Remark~\ref{rem:3}).
\end{example}

\section{Solutions of Minimal Degree}
\label{sec:minimal-degree}

The purpose of this section is to show that the solutions to
\eqref{eq:4}--\eqref{eq:6} as described in Theorem~\ref{th:1} are of minimal
degree. We begin with a lemma that shows that no solutions to $\ej{C}\qk\cej{C}
= hL$ exist if $h$ is a proper factor of $\rgcd(\vecLp)$. It uses the notion of
a \emph{reduced} polynomial $L = \vecLp + \eps \vecLd \in \DH[t]$. By this we
mean that $\gcd(\rgcd(\vecLp), \rgcd(\vecLd)) = 1$ that is, there are no
unnecessary real polynomial factors of~$L$.

\begin{lemma}
	\label{lem:6}
	Let $L = \vecLp + \eps \vecLd \in \DH[t]$ be a reduced saturated kinematic
  line polynomial. If $h$ is a proper factor of $\rgcd(\vecLp)$, the system of
  Equations~\eqref{eq:4}--\eqref{eq:6} has no polynomial solution.
\end{lemma}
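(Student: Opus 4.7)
The plan is to suppose for contradiction that a polynomial pair $(P, D)$ satisfies \eqref{eq:3}--\eqref{eq:5} with $h$ a proper divisor of $h' := \rgcd(\vecLp)$, and to derive an impossibility by comparing orders of vanishing at a root of the ``missing factor'' $h_0 := h'/h$, which has positive degree by hypothesis. First I would factor $h = \prod p_i^{f_i}$ and $h' = \prod p_i^{e_i}$ over the $\R$-irreducibles, so that $f_i \leq e_i$ with strict inequality at some index.

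The main case is that some real linear factor $p_i = t - \alpha$ satisfies $f_i < e_i$. Since $L$ is saturated, $e_i$ must be even (otherwise $(t-\alpha)$ would enter the minimal saturating factor). Equating the orders of vanishing on both sides of \eqref{eq:3} at $\alpha$, using that for real $\alpha$ one has $\cj{P}(\alpha) = \overline{P(\alpha)}$, forces the order of $P$ at $\alpha$ to be exactly $n := (f_i+e_i)/2$; in particular $f_i$ must also be even, and $e_i - f_i \geq 2$. Now let $r \geq 0$ denote the order of $D$ at $\alpha$. Each of the two products on the left-hand side of \eqref{eq:4} vanishes at $\alpha$ to order at least $n + r$, whereas the right-hand side $h\vecLd$ vanishes to order exactly $f_i$: $h$ contributes $f_i$, and $\vecLd(\alpha) \neq 0$ because the reducedness of $L$ prohibits $(t-\alpha) \mid \rgcd(\vecLd)$. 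Matching orders yields $n + r \leq f_i$, that is $r \leq (f_i - e_i)/2 \leq -1$, contradicting $r \geq 0$. This settles the case when $h_0$ has any real root.

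If instead every $\R$-irreducible factor of $h_0$ is quadratic, the analogous argument has to be carried out at a complex root $z$ of such a factor $p$. The obstacle is that $\H \otimes_\R \C \cong M_2(\C)$ contains zero-divisors, so $P(z)\qk\cj{P}(z) = 0$ no longer forces $P(z) = 0$, and the order of $P$ at $z$ cannot be read off directly from \eqref{eq:3}. My intended remedy is to localise at $p$: the scalar norm $P\cj{P} \in \R[t]$, which by \eqref{eq:3} must carry the factor $hh'$ to full multiplicity, constrains the ``quaternion content'' of $P$ near $z$, and Lemma~\ref{lem:2} then transfers this information to the vanishing of $\vecLd$ through the Plücker coupling, so that the analogous matching of orders once again forces a negative order on $D$. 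I expect this quadratic-factor case to be the main technical obstacle: the clean order-count that works for real roots must be replaced by a more delicate analysis of divisibility in $\H[t]$ by factors of $p$ and of the zero-divisor structure of $\H[t]/(p)$.
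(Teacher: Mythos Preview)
Your handling of the case where $h_0 \coloneqq \rgcd(\vecLp)/h$ has a real zero $\alpha$ is correct and is a genuinely different route from the paper's. The division-ring property of $\H$ forces the order of $P$ at $\alpha$ to be exactly $(e_i+f_i)/2$, and your order count on \eqref{eq:4} then yields $r\le (f_i-e_i)/2<0$. The paper, by contrast, never separates real from complex factors: it reruns the computation behind Lemma~\ref{lem:3} with the extra factor $f\coloneqq\rgcd(\vecLp)/h$ and arrives at
\[
  d_3=-\frac{2fd_0q_0+\tfrac12 k_7}{2q_3f},
\]
so that $d_3\in\R[t]$ would force $k_7\equiv 0\bmod f$, contradicting reducedness of $L$ (after a harmless rotation to make $\gcd(k_7,f)=1$). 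Your valuation argument is more transparent where it applies; the paper's explicit-formula argument is heavier but uniform in the factor type.

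The quadratic-factor case in your proposal is a genuine gap, and the remedy you sketch will not close it. Knowing that $p^{\,e_i+f_i}$ divides the scalar $P\cj{P}$ does \emph{not} produce a positive lower bound on the $p$-order of $\rgcd(P)$: for an irreducible quadratic $p$ one can have $\rgcd(P)=1$ while $p\mid P\cj{P}$ (take $P=t+\qk$, $p=t^2+1$). Concretely, with $\rgcd(\vecLp)=t^2+1$ and $h=1$, write $\vecLp=(t^2+1)M$ with $M=Q\qk\cj{Q}$; then $P=Q(t+\qk)$ solves \eqref{eq:3} with $p$-order $n=0$, and your inequality gives only $r\le f_i-n=0$, which is no contradiction. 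Lemma~\ref{lem:2} does not help either: it merely derives the Study condition from the Pl\"ucker condition and carries no divisibility information about $\vecLd$ or $D$. To push a valuation-type argument through for quadratic $p$ you would have to track one-sided quaternionic right factors of $P$ (linear $t-h$ with $h\cj{h}=p$) and analyse how they interact with $\cj{D}$ in $-P\qk\cj{D}-D\qk\cj{P}$; that analysis is essentially the componentwise computation the paper performs and does not remain at the level of a pure order count.
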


\begin{proof}
  If $h$ has a linear factor of odd multiplicity, no solution exists. Hence, we
  may assume that all linear factors of \(h\) are of even multiplicity. Define
  $g \coloneqq \rgcd(\vecLp)$ and assume that $f \coloneqq g / h$ is a nonzero
  polynomial of positive degree. Then the linear factors of \(f\) are of even
  multiplicity as well.

  Proceeding similar to the proof of Lemma~\ref{lem:3} (and of
  \cite[Lemma~5]{derin25}) we arrive at
  \begin{equation}
    \label{eq:29}
    d_3 = -\frac{2fd_0q_0 + \frac{1}{2}k_7}{2q_3f}.
  \end{equation}
  Compare this with \eqref{eq:21} and note the additional factor \(f\) in one
  summand of the numerator and in the denominator. We claim that there is no
  \(d_0 \in \R[t]\) such that the right-hand side of Equation~\eqref{eq:29} is
  polynomial. A necessary condition for this is
  \begin{equation}
    \label{eq:30}
    4fd_0q_0 = k_7 \mod q_3f
  \end{equation}
  Since \(\gcd(\rgcd(\vecKd), f) = 1\) and \(\rgcd Q = 1\), our usual argument
  (multiplying \(L\) from the left with a suitable unit quaternion) allows to
  assume, without loss of generality, that \(\gcd(k_7,f) = \gcd(k_7,q_3) = 1\).
  But then, the left-hand side of \eqref{eq:30} is not invertible module
  \(q_3f\) while the right-hand side is. Therefore, no solution exists.
\end{proof}

The next series of lemmas is similar to lemmas proved in \cite{derin25}.
Starting with a motion polynomial $C$ satisfying $\ej{C} \qk \cej{C} = hL$ and
assuming existence of a factor $f$ of $h / \gcd(\rgcd(\vecLp),h)$ we show that
there exists a right factor $E$ of $C$ that fixes $\qk$. Thus, $E$ can be
divided off from $C$ and a solution of lower degree is obtained.

\begin{lemma}
	\label{lem:7}
	Suppose the Equations~\eqref{eq:4}--\eqref{eq:6} have a motion polynomial
  solution \(C = P + \eps D\) and $f$ is an irreducible real quadratic factor of
  $h/\gcd(\rgcd(\vecLp),h)$ whose multiplicity as factor of $h$ is one. Then the
  system of equations also has a solution if $h$ is replaced by $\frac{h}{f}$.
\end{lemma}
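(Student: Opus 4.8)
The plan is to exploit the fact that the multiplicity of the irreducible quadratic $f$ as a factor of $h$ is exactly one, in order to locate a right factor $E$ of $C$ that fixes the line $\qk$, and then to divide it off. First I would write $h = fh'$ with $\gcd(f,h')$ not divisible by $f$, and recall from Lemma~\ref{lem:6} (and the hypothesis on $f$) that $f$ is \emph{not} a factor of $\rgcd(\vecLp)$; in particular $f \nmid P\cj P = h\vecLp\cj{\vecLp}/(\ldots)$ in the appropriate sense, so $f$ does not divide the primal norm $P\cj P$ beyond the single power coming from $h$. This is the structural input that makes a linear (in $t$) right factor corresponding to $f$ available. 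Since $f$ is irreducible over $\R$, it has a pair of conjugate complex roots; evaluating the motion polynomial $C$ at such a root $t_0$ gives a dual quaternion $C(t_0)$, and the equation $\ej{C}\qk\cej C = hL$ forces $\ej{C(t_0)}\,\qk\,\cej{C(t_0)}$ to be proportional to $L(t_0)$ with the proportionality factor $h(t_0)=0$. Hence $C(t_0)$ lies in the variety where the induced action on $\qk$ degenerates, which (because $f$ contributes only a simple zero to $P\cj P$) means $P(t_0)$ is a nonzero quaternion for which $P(t_0)\,\qk\,\cj{P(t_0)} = 0$ is impossible — so instead the relevant degeneracy is that $C(t_0)$ has a common right zero with the ``$\qk$-fixing'' factor. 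I would make this precise by producing an explicit candidate $E = P_1 + \eps D_1$ of degree one in $t$, whose primal part $P_1$ is the monic real-quadratic-root quaternionic factor associated to $f$ via the theory of \cite{choi02,schroecker24} and whose dual part is chosen so that $E$ fixes $\qk$, i.e. $\ej E\,\qk\,\cej E = \nu\,\qk$ for a real polynomial $\nu$ with $f \mid \nu$.

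The key steps, in order, are: (1) translate the data by a unit quaternion so that the genericity assumptions of Lemma~\ref{lem:3} and Lemma~\ref{lem:4} hold and so that $\gcd$-coprimality statements hold for $f$; (2) using Lemma~\ref{lem:6} as the obstruction analysis, show that if $C$ solves the system for $h$ then the reduction of $C$ modulo the ideal generated by $f$ (working in $\DH[t]/(f)$, which is a quaternion algebra over the quadratic field $\R[t]/(f)\cong\C$) is a \emph{zero divisor} that fixes $\qk$ up to scalar; (3) conclude that $C$ has a right factor $E$ with $\deg_t E = 1$, primal part dividing $\rgcd(P)\cdot(\text{quadratic})$ and with $E$ a line-fixing motion polynomial, so that $C = C' E$ with $C' \in \DH[t]$ of degree one less in $t$; (4) verify that $C' = P' + \eps D'$ is again a motion polynomial (its norm is $C\cj C/(E\cj E)$, still real) and that $\ej{C'}\qk\cej{C'} = \tfrac{h}{f}\,L$ — here one uses that right-multiplication by a $\qk$-fixing $E$ does not change the \emph{line} trajectory of $\qk$ but divides out exactly the factor $f$ from the cofactor $h$; (5) finally, if $E$ turns out not to be primitive (has a real polynomial factor), absorb it appropriately — but since $f$ has multiplicity one in $h$ this bookkeeping is mild.

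The construction of $E$ itself is routine once its primal part $P_1$ is known: $P_1$ is the (essentially unique) linear quaternionic polynomial with $P_1\cj{P_1} = f$ up to the sign/reality conventions, obtainable by factoring $f$ over $\H[t]$; then solving $\ej E\,\qk\,\cej E = \nu\,\qk$ for the dual part $D_1$ is a linear system identical in form to \eqref{eq:4} with $\vecLd = 0$, whose solvability is guaranteed because the primal equation already holds. One should double-check that the resulting $E$ is a \emph{right} factor of $C$ and not merely a left factor; this follows from evaluating $C$ at the complex root of $f$ and matching the zero of $C$ with that of $E$ on the correct side — the induced action $C \mapsto \ej C\,\qk\,\cej C$ is compatible with right multiplication, so it must be the right factorization that is relevant, just as in \cite{hegedus13,derin25}.

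The step I expect to be the main obstacle is step (3): proving that the pointwise degeneracy of $C$ at the roots of $f$ actually \emph{globalizes} to a genuine polynomial right factor of degree one in $t$, rather than a mere numerical coincidence at isolated parameter values. In the point-trajectory and plane-trajectory settings this is handled by the mature factorization machinery; here, because $f$ is irreducible quadratic (two conjugate complex roots rather than one real root), one needs the analogue over $\R[t]/(f)\cong\C$ and must be careful that the two conjugate roots furnish \emph{the same} real right factor $E$, i.e.\ that the complex data assemble into something defined over $\R$. I would argue this by a Galois/conjugation-invariance argument: the zero set of $C$ (viewed in the appropriate module) is defined over $\R$, so complex-conjugate zeros come in pairs and their associated linear factors multiply to a real quadratic-in-$t$ factor; checking that this real factor is precisely the $E$ we want — of degree one in $t$, line-fixing, with primal norm exactly $f$ — is where the care lies, and is exactly the analogue of the corresponding step in \cite[proof of Lemma~7]{derin25} adapted to the vectorial (line) rather than scalar (plane) setting.
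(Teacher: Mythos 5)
Your overall strategy — locate a degree-one right factor $E$ of $C$ that fixes the line $\qk$, divide it off, and verify the quotient solves the system with $h$ replaced by $h/f$ — is the same as the paper's, but the way you propose to build $E$ has two genuine flaws. You describe the primal part $P_1$ of $E$ as ``the (essentially unique) linear quaternionic polynomial with $P_1\cj{P_1}=f$ \dots\ obtainable by factoring $f$ over $\H[t]$.'' It is not essentially unique: linear polynomials of norm $f$ form a whole family under left multiplication by constant unit quaternions, and only one member of that family (up to units) is actually a right divisor of $P$. More importantly, the paper's argument needs $P_1$ to lie in $\C[t]=\R[t]+\R[t]\qk$, so that $P_1\qk\cj{P_1}=f\qk$ and $E$ therefore acts on $\qk$ by rescaling only. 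That this right divisor of $P$ can be taken in $\C[t]$ is a nontrivial structural consequence of $P\qk\cj{P}=h\vecLp$ together with $f\nmid\rgcd(\vecLp)$, and your plan never derives it; it cannot follow from ``factoring $f$ over $\H[t]$'' alone.

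Second, you propose to ``solve $\ej{E}\qk\cej{E}=\nu\qk$ for the dual part $D_1$'' as a free linear system. But the dual part of $E$ is not free: once the primal part is a right divisor of $P$, the requirement $C=\tilde{C}E$ (with $\tilde{C}$ a motion polynomial) already pins down $D_1$ up to translation along $\qk$. The real task is the opposite of what you describe — to show that the $D_1$ forced by the factorization of $C$ automatically has vanishing $\qi$- and $\qj$-components, i.e.\ that the determined right factor happens to fix $\qk$ — and your plan leaves this open. Finally, the ``globalization'' step that you correctly identify as the hard part is exactly what the paper avoids re-deriving by invoking \cite[Lemma~2 and Lemma~3]{hegedus13}: those results deliver the right factorization $C=\tilde{C}E$ with $E\cj{E}=f$ and $\tilde{C}$, $E$ both motion polynomials in one stroke. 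Your Galois/conjugation sketch is the idea underlying those lemmas, but unless you cite them you would have to reprove them, and your outline does not get there.
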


\begin{proof}
  Our aim is to construct a new motion polynomial $\tilde C$ such that
  $C=\tilde{C}E$ for some motion polynomial $E$ satisfying $\ej{E} \qk
  \cj{\ej{E}} = f\qk$. Then
  \[
    \ej{\tilde{C}} \qk \cej{\tilde{C}} =
    \tfrac{1}{f}\ej{(\tilde{C}E)} \qk \cej{(\tilde{C}E)} =
    \tfrac{1}{f}\ej{C} \qk \cj{\ej{C}} =
    \tfrac{h}{f}L.
  \]
  There exist \(F = f_0 + f_3\qk \in \C[t] \setminus \R[t]\) and \(Q \in \H[t]\)
  such that \(P = QF\) and \(F\cj{F} = f\). Moreover, \(f\) is not a factor of
  \(Q\) by our assumptions. Then, \cite[Lemma~2 and Lemma~3]{hegedus13}
  guarantee existence of motion polynomials $\tilde{C}$, $E \in \DH[t]$ such
  that $C = \tilde{C}E$ and $E\cj{E} = f$. This implies $\ej{E} \qk \cej{E} =
  f\qk$.
\end{proof}

\begin{example}
  \label{ex:2}
	We revisit the line polynomial $L$ from Example~\ref{ex:1} and examine the
  motion polynomial
	\begin{multline*}
		\hat{C} =
    - 2t^3 + 13t^2 - 26t + 10 + \qi(t^4 - 8t^3 + 20t^2 - 8t - 20) \\
    +\qj(-t^4 + 9t^3 - 31t^2 + 48t - 30) + \qk(t^5 - 6t^4 + 12t^3 - 10t^2 + 8t + 20)\\
    +\tfrac{1}{5}\eps(
    14t^5-37t^4+52t^3-41t^2+29t+108 + \qi(14t^4 + 81t^3 - 13t^2 - 183t + 92)\\
    + \qj(14t^4 + 140t^3 + 230t^2 + 21t - 48) + \qk(28t^3 - 43t^2 + 140t - 34)).
	\end{multline*}
	This motion polynomial satisfies the identity:
	\[
    \ej{\hat{C}} \qk \cej{\hat{C}} = h L
	\]
	where
	\[
	h = (t^2 + 1)g \quad\text{and}\quad g = \rgcd(L_p) = t^2-6t+10.
	\]
	According to Lemma~\ref{lem:5}, there exists a linear motion polynomial $E =
  e_0 + e_3\qk + \eps(e_4 + e_7\qk) \in \DH[t]$ that satisfies \(E\cj{E} =
  t^2+1\) and is a right factor of $\hat{C}$. Indeed, applying the factorization
  algorithm of \cite{hegedus13} yields
	\[
    E = 1 + t\qk + \eps(t - \qk).
  \]
	It satisfies $\hat{C} = C E$, where $C$ is the motion polynomial defined in
  Equation~\eqref{eq:28}.
\end{example}

\begin{lemma}
	\label{lem:8}
	If the system of equations \eqref{eq:4}--\eqref{eq:6} has a motion polynomial
  solution and if there exists a real polynomial \(f \in \R[t]\), either linear
  or quadratic and irreducible over \(\mathbb{R}\), such that \(f^2\) divides \(
  h/\gcd(\rgcd(\vecLp), h)\), then there exists an integer \(m \geq 1\) and a
  solution to the same system \eqref{eq:4}--\eqref{eq:6} with \(h\) replaced by
  \(h/f^{2m}\).
\end{lemma}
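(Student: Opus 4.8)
My plan is to reduce the quadratic-power case to an iterated application of Lemma~\ref{lem:7}. The obstruction is that Lemma~\ref{lem:7} only handles an irreducible quadratic factor $f$ whose multiplicity \emph{as a factor of $h$} is exactly one, while here $f^2$ (hence possibly a much higher power of $f$) divides $h/\gcd(\rgcd(\vecLp),h)$, and $f$ could even be linear. So the first step is to set up an induction on the total multiplicity of $f$ in $h$. Write $h = f^{2j}\,h'$ with $\gcd(f,h')$ chosen so that $f^{2j}$ is the exact power of $f$ dividing $h$ that lies in the ``excess'' part $h/\gcd(\rgcd(\vecLp),h)$; the hypothesis guarantees $j\ge 1$. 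I then want to peel off $f^2$ at a time.

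The key step is to show that, given a motion polynomial solution $C = P+\eps D$ with $\ej{C}\qk\cej{C} = hL$ and $f^2 \mid h/\gcd(\rgcd(\vecLp),h)$, there is a right factor $E$ of $C$ with $E\cj{E} = f$ and $\ej{E}\qk\cej{E} = f\qk$. For a quadratic irreducible $f$ this is essentially the argument of Lemma~\ref{lem:7}: since $f^2$ divides the excess part, $P$ factors as $P = QF$ with $F\cj F = f$, $F \notin \R[t]$, and crucially $f \nmid Q$ fails to be an obstruction because the surplus power of $f$ in $P\cj P = h\vecLp\cjvecLp = h^2 (\text{square})$ (wait — $L\cj L$ is a square for a kinematic surface) forces enough structure; more precisely, $P\cj P = h^2\,\vecLp\cjvecLp/\ell^{\cdots}$ and the power of $f$ dividing $P\cj P$ is even and at least $2$, so $F$ with $F\cj F = f$ can be split off via \cite[Lemma~2, Lemma~3]{hegedus13} exactly as in Lemma~\ref{lem:7}. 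For a \emph{linear} $f$, however, $f^2 \mid h/\gcd(\rgcd(\vecLp),h)$ means $f^2$ is surplus; but a linear factor appearing with even multiplicity in $P\cj P$ and not absorbed into $\rgcd(\vecLp)$ would contradict saturatedness unless it is already a factor of $\rgcd(P)$ — so in fact $f \mid P$ and we can take $E = f\cdot(\text{unit})$, or more cleanly, $E$ a real polynomial times a constant unit dual quaternion fixing $\qk$. I expect the linear case to need a short separate argument distinguishing it from the quadratic one; this is the main obstacle and where I would spend the most care.

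Once the single reduction step is established, I finish by induction: dividing $E$ off from $C$ yields $\tilde C$ with $\ej{\tilde C}\qk\cej{\tilde C} = \frac{h}{f}L$. Apply the reduction a second time (the surplus power of $f$ in $\frac{h}{f}$ relative to $\gcd(\rgcd(\vecLp),\frac{h}{f})$ dropped by exactly one, so if it was $\ge 2$ it is still $\ge 1$, and we need it $\ge 2$ to reapply — hence I pair the steps) to get $\ej{\tilde{\tilde C}}\qk\cej{\tilde{\tilde C}} = \frac{h}{f^2}L$. Iterating, after $m$ such pairs we reach $h/f^{2m}$, where $m\ge 1$ is the largest integer with $f^{2m} \mid h/\gcd(\rgcd(\vecLp),h)$; this is the claimed statement. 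The bookkeeping point to check carefully is that each reduction preserves the hypotheses needed for the next one, in particular that $\gcd(\rgcd(\tilde L_p), h')$ does not change under these operations since we only modify the co-factor $h$, not $L$ itself.
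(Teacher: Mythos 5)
Your plan departs from the paper's proof in a structural way, and unfortunately the route you choose has gaps that I don't see how to repair.

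The paper's argument does \emph{not} iterate Lemma~\ref{lem:7}. It extracts the whole surplus power of $f$ in one step: writing $P = Qf^m$ with $m$ the exact multiplicity of $f$ in $\rgcd(P)$, it constructs a right factor $E = f^m + \eps e_7\qk$ whose \emph{primal part is the real polynomial $f^m$}, not a complex polynomial $F$ with $F\cj{F}=f$. The dual part $F = e_5\qi + e_6\qj + e_7\qk$ is found by quaternionic division of $D$ by $f^m$, and the heart of the proof is showing $e_5 = e_6 = 0$, which uses that $f^{2m}$ divides $\ej{C}\qk\cej{C}$ and that $f$ does not divide $Q\cj{Q}$. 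This mechanism works uniformly for linear and irreducible quadratic $f$ because the degree of $E$ is $\deg f^m$, an integer, regardless of $\deg f$.

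Your plan instead tries to peel off factors $E$ with $E\cj{E}=f$, imitating Lemma~\ref{lem:7}. Two concrete problems arise. First, for linear $f$ the equation $E\cj{E}=f$ has no polynomial solution $E$ at all (the degree would have to be $\tfrac12$), so you switch to ``$E = f\cdot(\text{unit})$.'' But $C = fP' + \eps D$ need not have $f$ as a factor of $D$, so $f$ (times a constant unit) is generally not a right factor of $C$; the dual part is exactly what makes this case nontrivial and why the paper's $E$ has a non-zero $\eps$-part $e_7\qk$. Second, for irreducible quadratic $f$ with $m\ge 2$: after pulling off one $F$ via Lemma~\ref{lem:7}'s machinery you are left with a primal part $Q'$ satisfying $f^{m-1}\mid\rgcd(Q')$, which violates the standing hypothesis $f\nmid Q$ that \cite[Lemma~2 and 3]{hegedus13} (and the proof of Lemma~\ref{lem:7}) rely on, so the second extraction in your ``pair'' is not justified. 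Your remark that ``the surplus power of $f$ in $P\cj{P}$ forces enough structure'' does not address this: the obstruction is precisely the presence of a real factor $f$ in $Q$, and it does not disappear. So the inductive loop does not close. The fix is essentially to abandon the iteration and carry out the single-shot extraction of $f^m + \eps e_7\qk$ as in the paper.
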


Lemma~\ref{lem:8} is very similar to \cite[Lemma~7]{derin25} and so is its
proof. Therefore, we will sometimes refer the reader to~\cite{derin25}.

\begin{proof}[Proof of Lemma~\ref{lem:8}]
	Assume we already have a solution to the system equations
  \eqref{eq:4}--\eqref{eq:6} represented as the motion polynomial \( C = P +
  \eps D \). The primal part can be expressed in the form \( P = Q f^m \) where
  $ m \in \mathbb{N} $ and \( Q \in \H[t] \) such that \( \gcd(\rgcd(Q), f) = 1 \).

	We aim to construct a motion polynomial \(E = f^m + \eps F \in \DH[t]\) where
  \(F = e_7\qk \in \H[t]\) is a right factor of $C$. The construction of \(F\)
  is literally the same as in the proof of \cite[Lemma~7]{derin25}. We obtain
  the factorization
  \[
    C = f^mQ + D = (Q + \eps K)(f^m + \eps F)
  \]
  for suitable \(K\), \(F \in \H[t]\) such that \(\deg F < f^m\).

  Because the norm of \(C\) equals \(f^{2m}Q\cj{Q}\), the norm of \(f^m + \eps
  F\) equals \(f^{2m}\) and \(f^m + \eps F\) is a motion polynomial whence \(F =
  e_5\qi + e_6\qj + e_7\qk\) for some real polynomials \(e_5\), \(e_6\), \(e_7
  \in \R[t]\). We still need to show that \(e_5 = e_6 = 0\). To do this, we
  recall that \(f^{2m}\) is a factor of \(\ej{C}\qk\cej{C}\). A straightforward
  computation gives
  \[
    \ej{C}\qk\cej{C} =
    f^{2m}(Q-\eps K)\qk(\cj{Q}-\eps\cj{K}) + f^m\eps Q(\qk F - F\qk)\cj{Q}.
  \]
  Thus, \(f^m\) needs to be a factor of \(Q(\qk F - F\qk)\cj{Q} = 2(qe_5\qj -
  qe_6\qi)\) where \(q = Q\cj{Q}\). Since \(f\) is not a factor of \(q\), we
  infer that \(f^m\) divides both, \(e_5\qj\) and \(e_6\qi\). But since \(\deg
  e_5 \le \deg F < \deg f^m\) we have \(e_5 = 0\) and similarly \(e_6 = 0\) as
  well.
\end{proof}

\begin{example}
	We consider again the line polynomial $L$ of Examples~\ref{ex:1} and
  \ref{ex:2} and the motion polynomial
  \begin{multline*}
    \tilde{C} =
      t^6-6t^5+12t^4-12t^3+21t^2-6t+10
      +\qi(t^5-8t^4+23t^3-28t^2+22t-20)\\
      +\qj(t^5-9t^4+29t^3-39t^2+28t-30)
      +\qk(t^5-4t^4-t^3+16t^2-2t+20)\\
      + \tfrac{1}{2}\eps (
          -2t^3+20t^2+4t-28
          +(t^6-t^5+8t^4-32t^3-19t^2+23t-68)\qi\\
          +(-8t^4+14t^3-4t^2-28t+28)\qj
          +(-t^6+5t^5-10t^4+10t^3-21t^2+5t-12)\qk)
  \end{multline*}
  It satisfies $\tilde{C}_{\eps} \qk \cej{\tilde{C}} = hL$ where $h =
  (t^2+1)^2g$ and $g = \rgcd(\vecLp) = t^2-6t+10$. Thus, the assumptions of
  Lemma~\ref{lem:8} are fulfilled with \(f = t^2+1\). By the proof of this
  lemma, there exists a quadratic motion polynomial $E = f + \eps e_7\qk \in
  \DH[t]$ that is a right factor of $\tilde{C}$. Indeed, with $E = t^2+1 +
  \eps\qk$ we have $\tilde{C} = CE$ where $C$ is the original motion polynomial
  of Example~\ref{ex:2}.
\end{example}

Combining the results of Lemmas~\ref{lem:6}--\ref{lem:8} we can now argue as follows:
\begin{itemize}
\item If $h$ contains a polynomial factor that is not a factor of
  $\rgcd(\vecLp)$ we can, by Lemma~\ref{lem:7} and Lemma~\ref{lem:8}, reduce the
  degree of a solution polynomial~$C$.
\item Iterating this process, Lemma~\ref{lem:6} ensures that we will ultimately
  end with $h = \rgcd(\vecLp)$.
\end{itemize}
This proves

\begin{theorem}
  \label{th:2}
  Given a saturated kinematic rational ruled surface $L = \vecLp +
  \eps\vecLd$ with $\rgcd(\vecLp) = h$, the motion polynomials of minimal
  degree are precisely those described in Theorem~\ref{th:1}. The minimal degree
  motion polynomial is unique if and only if $\rgcd(\vecLp) = 1$.
\end{theorem}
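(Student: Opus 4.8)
The plan is to assemble the pieces already in hand. Theorem~\ref{th:1} produces, for a saturated kinematic ruled surface with $h = \rgcd(\vecLp)$, a motion polynomial $C = P + \eps D$ with $\deg C = \tfrac12(\deg L + \deg h)$. To see this is minimal, I would argue that any motion polynomial $\hat C$ with $\ej{\hat C}\qk\cej{\hat C} = \hat h L$ for some $\hat h \in \R[t]$ has primal part $\hat P$ satisfying $\hat P\qk\cj{\hat P} = \hat h\vecLp$, hence $\hat P\cj{\hat P}$ is (up to the square $\hat h$) forced, and by the degree accounting $\deg \hat C = \deg \hat P = \tfrac12(\deg L + \deg\hat h)$. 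So minimising $\deg\hat C$ is the same as minimising $\deg\hat h$ among admissible co-factors. Lemma~\ref{lem:6} shows $\hat h$ cannot be a \emph{proper} factor of $\rgcd(\vecLp)$ (no solution then), so $\deg\hat h \ge \deg\rgcd(\vecLp)$ once we know $\rgcd(\vecLp)\mid\hat h$ is not required but $\rgcd(\vecLp)$ worth of factors is unavoidable — more precisely, the argument is: write $\hat h = h_0 h_1$ where $h_0 = \gcd(\rgcd(\vecLp),\hat h)$. If $h_1$ has positive degree, Lemmas~\ref{lem:7} and \ref{lem:8} let us split off a right factor $E$ of $\hat C$ fixing $\qk$, producing a solution with $\hat h$ replaced by $\hat h/f^{2m}$ (resp. $\hat h/f$), strictly lowering $\deg \hat C$. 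Iterating drives $h_1 \to 1$, i.e. $\hat h \mid \rgcd(\vecLp)$, and then Lemma~\ref{lem:6} forbids $\hat h$ being a proper divisor, forcing $\hat h = \rgcd(\vecLp) = h$. Hence every minimal-degree solution has co-factor exactly $h$, which is precisely the situation of Theorem~\ref{th:1}.

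For the characterisation of minimal-degree solutions, once $\hat h = h$ is forced, the primal part $\hat P$ solving $\hat P\qk\cj{\hat P} = h\vecLp$ is unique up to right multiplication by a unit quaternion of the form $q_0 + q_3\qk$ (the standard ambiguity from \cite{choi02,schroecker24}, noted after Equation~\eqref{eq:3}), which corresponds to a fixed rotation about $\qk$ in the moving frame. So up to this harmless ambiguity $\hat P = P$, and it remains to see the dual part. Given $P$, Lemma~\ref{lem:3} parametrises \emph{all} rational solutions $D$ of \eqref{eq:4} by the free parameter $d_0$; Lemma~\ref{lem:4} pins down a polynomial solution via \eqref{eq:21}, and Lemma~\ref{lem:5} shows that adding $\lambda Q\qk$ (the general solution of the homogeneous equation that keeps $D$ polynomial) brings the degree down to $\deg D \le \deg P$. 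Since the homogeneous solution space over $\R[t]$ is exactly $\R[t]\cdot Q\qk$ (one checks $Q\qk\cj{(Q\qk)} $-type computations, or observes the linear system has a one-dimensional kernel), every polynomial $D$ with $\deg D \le \deg P$ arises this way. Thus the minimal-degree solutions are exactly $\{P + \eps(D + \lambda Q\qk)\}$ with $\lambda$ ranging over polynomials with $\deg\lambda \le \deg P - \deg Q$, as described in Theorem~\ref{th:1} and Remark~\ref{rem:3}.

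Finally, uniqueness: the remaining freedom in a minimal-degree solution is (i) the rotational ambiguity $q_0 + q_3\qk$ in $P$ and (ii) the choice of $\lambda$ in $D + \lambda Q\qk$. Both are genuine non-uniquenesses \emph{unless} they collapse. When $\rgcd(\vecLp) = h = 1$ we have $\deg P = \deg Q$, so by Remark~\ref{rem:3} the $\lambda$-freedom is a single real parameter $\lambda_0 \in \R$; together with the one-parameter rotational freedom about $\qk$, these combine to give the two-parameter group of displacements fixing the line $\qk$ pointwise — i.e. rotations about $\qk$ and translations along $\qk$ — which is exactly the stabiliser of the moving line and therefore does \emph{not} change the line trajectory $L$. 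So the motion itself (as a map to $SE(3)$, modulo the choice of which line in the moving frame is carried) is unique. Conversely, if $\deg h > 0$ then $\deg P = \deg Q + \deg h > \deg Q$, so Remark~\ref{rem:3} yields a genuinely higher-dimensional family of $\lambda$'s, giving honestly distinct motion polynomials of the same minimal degree — non-uniqueness. I expect the main obstacle to be making the "iterate Lemmas~\ref{lem:7}--\ref{lem:8} until $h_1 = 1$" step fully rigorous: one must track that each reduction step genuinely lowers $\deg\hat C$, that the hypotheses of the two lemmas cover every irreducible factor of $h_1$ (linear odd-multiplicity handled by Lemma~\ref{lem:6}, quadratic simple by Lemma~\ref{lem:7}, higher even powers by Lemma~\ref{lem:8}), and that the process terminates — essentially an induction on $\deg h_1$ — and, for the uniqueness direction, to phrase "unique" precisely as uniqueness of the motion up to the stabiliser of the moving line.
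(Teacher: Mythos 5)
Your proposal follows the paper's argument exactly: the degree-reduction loop via Lemmas~\ref{lem:7} and \ref{lem:8}, the stopping criterion supplied by Lemma~\ref{lem:6} forcing $\hat h = \rgcd(\vecLp)$, and the characterisation of the resulting solutions from Lemmas~\ref{lem:3}--\ref{lem:5} together with Remark~\ref{rem:3}. You have fleshed out the ``precisely those'' and the uniqueness clauses in more detail than the paper's terse pre-theorem sketch, and the concern you flag --- whether Lemmas~\ref{lem:7} and \ref{lem:8} jointly cover \emph{every} irreducible factor of $\hat h / \gcd(\rgcd(\vecLp),\hat h)$, e.g.\ a simple quadratic factor there whose multiplicity in $\hat h$ itself exceeds one --- is a fair observation about the paper's own presentation as well.
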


\section{The Minimal Degree Motion of the Cylindroid}
\label{sec:cylindroid}

Now we apply our theory to the cylindroid (Plücker conoid), a famous rational
ruled surface with important applications in kinematics and mechanism science.
It is also considered in \cite{selig24} where the authors, based on the cubic
rational parametrization
\begin{equation}
  \label{eq:31}
  L = t(1+t^2)\qi + (1+t^2)\qj - 2t\eps(\qi - t\qj)
\end{equation}
of the cylindroid, construct a rational motion to guide a line along its
rulings.

We attempt to compute rational motions to generate $L$ which are of minimal
degree. However, the norm polynomial $L\cj{L} = (t^2+1)^3$ is not a square and a
rational motion creating exactly the parametrization \eqref{eq:31} cannot exist.
This seeming contradiction to results of \cite{selig24} can be explained by the
observation that the surface's spherical image, as induced by \eqref{eq:31}, is
an irrationally parametrized great circle on the unit sphere. Thus,
\eqref{eq:31} is not kinematic. However, a circle can be parametrized
rationally. Using the rational two-to-one parameterization
\begin{equation}
  \label{eq:32}
  L = (1+t^2)^2((1-t^2)\qi + t\qj) - 4t(1-t^2)\eps(2t\qi - (1-t^2)\qj)
\end{equation}
instead of \eqref{eq:31} we see that, indeed, $L\cj{L} = (1+t^2)^6$ is a square
and we expect $L$ to appear as line trajectory of a rational motion.

Denote by \(L_p\) the primal part of \(L\). We have \(g = \rgcd(L_p) =
(1+t^2)^2\) and \(M = L_p/g = (1-t^2)\qi + 2t\qj\) and encounter another
obstacle, namely \(m_3 = 0\) which contradicts our genericity assumptions and
makes a lot of essential equations, most importantly
\eqref{eq:21}--\eqref{eq:24}, invalid. Therefore, we base our computation not on
the parametric equation \eqref{eq:32} of the cylindroid but on
\begin{equation*}
  L = (1+t^2)^2((1-t^2)\qk + t\qj) - 4t(1-t^2)\eps(2t\qk - (1-t^2)\qj)
\end{equation*}
where $\qi$ and $\qk$ have been interchanged.\footnote{An alternative would be
  to derive different versions of \eqref{eq:21}--\eqref{eq:24} under the
  assumption that either $m_1 \neq 0$ or $m_2 \neq 0$.} This requires to updated
$L_p$ and $M$ accordingly but does not change $g$.

Using the procedure proposed in \cite{schroecker24} we compute \(Q = -t\qj -
\qk\) such that \(M = Q \qk \cj{Q}\).

Theorem~\ref{th:1} predicts existence of a motion polynomial \(C = P + \eps D \in
\DH[t]\) such that \(\ej{C} \qk \cej{C} = gL\) and \(P = Qg\). The dual part
\(D = d_0 + d_1\qi + d_2\qj + d_3\qk\) is found from Equation~\eqref{eq:24} and
Equations~\eqref{eq:21}--\eqref{eq:24}.

Because of \(q_0 = 0\) and \(q_3 = -1\), Equation~\eqref{eq:24} is satisfied
with \(a = 0\) and \(b = -1\). Thus, \(d_0 = 0\) by \eqref{eq:24}. Plugging this
into \eqref{eq:21}--\eqref{eq:24}, we find
\[
  d_1 = 0,\quad
  d_2 = -2t(t^2-1),\quad
  d_3 = 2t^2(t^2-1).
\]
These expressions are polynomial, as expected, and already of degree less than
\(\deg P = 5\). Thus, the polynomial
\[
  C = P + \eps D = (t^2+1)^2(-t\qj - \qk) + 2t(t^2-1)\eps(t\qk-\qj)
\]
is a motion polynomial of minimal degree and satisfies \(\ej{C}\qk\cej{C} =
gL\). This solution is not unique. All further minimal degree solutions can be
written as
\begin{equation}
  \label{eq:33}
  C + d\eps(1-\qi)
\end{equation}
where \(d \in \R[t]\) is of degree at most five.

Up to coordinate changes and re-parametrization, the motion given in
\cite[Equation~(6)]{selig24}, which was derived in a completely different way,
is one instance of~\eqref{eq:33}. A mechanism to draw the cylindroid based on
this parametrization is depicted in \cite[Figure~7]{selig24}.

\section{Mechanism Design}
\label{sec:mechanism-design}

In this section we illustrate how to construct a mechanism with a prescribed
kinematic rational ruled surface $L$ as trajectory of a moving line. The idea is
to first compute the minimal degree motion $C$ and then use motion factorization
\cite{hegedus13,li18} to construct the mechanism. Motion factorization is
meanwhile a mature technology with a free and easy to use reference
implementation \cite{huczala2024}. Therefore, we confine ourselves to one
example.

In fact, we do a bit more than outlined above. Our input will not be the
rational kinematic ruled surface $L$ but three lines
\begin{equation}
  \label{eq:34}
  \begin{aligned}
    L_0 = L_{0,p} + \eps L_{0,d} =& -0.5692893982 \qi + 0.1879452202 \qj + 0.8003662757 \qk \\
         & + \eps(2.394938124 \qi - 1.698415049 \qj + 2.102314808 \qk),\\
    L_1  = L_{1,p} + \eps L_{1,d}=& -0.9543899389 \qi + 0.1373020929 \qj + 0.2651188032 \qk \\
         & + \eps(0.8624265591 \qi + 1.131167268 \qj + 2.518793799 \qk),\\
    L_2  = L_{2,p} + \eps L_{2,d}=& -0.9834592139 \qi - 0.04634208347 \qj - 0.1751010735 \qk  \\
         & + \eps(-0.4177553080\qi + 2.772307919 \qj + 1.612615937\qk)
  \end{aligned}
\end{equation}
that are to be interpolated by $L$ in that order. As a consequence, also the
mechanism we design will guide a moving line such that it coincides with $L_0$,
$L_1$, and $L_2$. The interpolation conditions are
\begin{equation}
  \label{eq:35}
  L(t_0) = w_0 L_0,\quad
  L(t_1) = w_1 L_1,\quad
  L(t_2) = w_2 L_2
\end{equation}
where $t_0 < t_1< t_2 \in \R$ are the parameter times of the respective lines
and $w_0$, $w_1$, $w_2 \in \R \setminus \{0\}$ are some weights that can be
chosen. We pick $t_0 = -1$, $t_1 = 0$, $t_2 = 1$ and $w_0 = w_1 = w_2 = 1$.

Since $L$ is supposed to be kinematic, its primal parts should be of the shape
$\vecLp = P\qk\cj{P}$ for some polynomial $P \in \H[t]$. Denoting by
\begin{equation*}
  \mathcal{L}_0 = \frac{t(t-1)}{2},\quad
  \mathcal{L}_1 = -\frac{t-1}{t+1},\quad
  \mathcal{L}_2 = \frac{t(t+1)}{2}
\end{equation*}
the Lagrange polynomials with knot vector $(t_0,t_1,t_2)$, we make the ansatz $P
= p_0\mathcal{L}_0 + p_1\mathcal{L}_1 + p_2\mathcal{L}_2$. By
Equation~\eqref{eq:35}, the yet to be determined coefficients $p_0$, $p_1$, $p_2
\in \H$ are subject to
\begin{equation}
  \label{eq:36}
  p_i\qk\cj{p}_i = L_{p,i},\quad i \in \{0,1,2\}.
\end{equation}

Conditions of this type are well-known, for example in the context of the
description of Pythagorean hodograph curves via their quaternionic pre-image
\cite[Chapter~22]{farouki08}. By a minor variation of
\cite[Equation~2.13]{sir07}, all solutions to \eqref{eq:36} are given as
\begin{equation}
  \label{eq:37}
  p_i =
  \sqrt{\vert L_{p,i} \vert}
  \frac{\frac{L_{p,i}}{\vert L_{p,i}\vert} + \qk }{\bigl\vert\frac{L_{p,i}}{\vert L_{p,i}\vert} + \qk\bigr\vert }(\cos\varphi_i + \sin\varphi_i \qk)
\end{equation}
where $\vert L_{p,i} \vert \coloneqq \sqrt{L_{p,i}\cj{L}_{p,i}}$ and $\varphi_i
\in [0,2\pi)$.\footnote{The intuition behind \eqref{eq:37} is as follows: For
  $\varphi = 0$, $p_i$ just describes the half-turn around the bisector of $\qk$
  and $L_{p,i}$, composed with a suitable scaling. All rotations and scalings to
  map $\qk$ to $L_{p,i}$ are obtained by right-multiplying with a rotation that
  fixes $\qk$.} We pick the solutions corresponding to $\varphi_0 = \varphi_1 =
\varphi_2 = 0$:
\begin{equation*}
  \begin{aligned}
    p_0 = -0.3000113351\qi + 0.09904575183\qj + 0.9487798153\qk,\\
    p_1 = -0.5999916401\qi + 0.08631703309\qj + 0.7953360307\qk,\\
    p_2 = -0.7656688635\qi - 0.03607947323\qj + 0.6422222851\qk.
  \end{aligned}
\end{equation*}
The are vectorial and determine the primal part
\begin{equation*}
  \begin{aligned}
  \vecLp &= (p_0\mathcal{L}_0 + p_1\mathcal{L}_1 + p_2\mathcal{L}_2)\qk
          \cj{(p_0\mathcal{L}_0 + p_1\mathcal{L}_1 + p_2\mathcal{L}_2)} \\
            &= (0.000022162 \qi-0.0000180973 \qj-0.0075160582 \qk) t^4 \\
              &\quad-(0.0206626527 \qi-0.0167874447 \qj-0.0238095903 \qk) t^3 \\
              &\quad+(0.177993471 \qi-0.0664824272 \qj+0.0550298561 \qk) t^2 \\
              &\quad-(0.1864222551 \qi+0.1339310966 \qj+0.5115432649 \qk) t \\
              &\quad-0.9543899389 \qi+0.1373020929 \qj+0.2651188032 \qk
  \end{aligned}
\end{equation*}

Now, the dual part $\vecLd$ of $L$ is subject to Plücker condition
\begin{equation}
  \label{eq:38}
  \vecLp\cjvecLd + \vecLd\cj\vecLp = 0
\end{equation}
and the interpolation constraints
\begin{equation}
  \label{eq:39}
  \vecLd(t_i) = w_iL_{i,d},\quad i \in \{0,1,2\}.
\end{equation}
Equations~\eqref{eq:38} and \eqref{eq:39} give rise to a system of linear
equations to determine the unknown quaternionic coefficients $\ell_{d,i} \in \H$
of $\vecLd = \sum_{i=0}^4 \ell_{d,i}t^i$. We obtain the unique solution
\begin{equation*}
  \begin{aligned}
  \vecLd &= (0.000022162\qi - 0.0000180973\qj - 0.0075160582\qk) t^4 \\
               &\quad- (0.0206626527\qi - 0.0167874447\qj - 0.0238095903\qk) t^3 \\
               &\quad+ (0.177993471\qi - 0.0664824272\qj + 0.0550298561\qk) t^2 \\
               &\quad- (0.1864222551\qi + 0.1339310966\qj + 0.5115432649\qk) t \\
               &\quad- 0.9543899389\qi + 0.1373020929\qj + 0.2651188032\qk.
  \end{aligned}
\end{equation*}

The sought motion polynomial $C = P + \eps D$ is now determined by the already
computed primal part $P$ and the dual part
\begin{equation*}
  \begin{aligned}
  D &= (-0.4263398024\qi - 0.521506355\qj + 0.2008885718\qk - 0.9525313145) t^2 \\
    &\quad- (0.4807564566\qi - 1.302422976\qj + 0.0368980596\qk - 2.837641695) t \\
    &\quad- 10.49908633 + 7.295181813\qj - 0.7917388696\qk
  \end{aligned}
\end{equation*}
which we can compute by using the formulas derived previously.

The motion polynomial $C$ is of degree two and therefore describes the coupler
motion of a Bennett linkage \cite{brunnthaler05,hamann11}. Its axes can be found
from the two factorizations of $C = c_2(t-h_1)(t-h_2) = c_2(t-k_1)(t-k_2)$ into linear factors:
\begin{equation}
  \label{eq:40}
  \begin{aligned}
    c_2 &= 0.0671515410\qi - 0.05483389380\qj + 0.0001650193\qk \\
        &\quad-\eps(0.9525313145 + 0.4263398024\qi + 0.5215063550\qj - 0.2008885718\qk), \\
    h_1 &= 3.437729498 +0.4245869672\qi - 0.6826178148\qj - 1.925681116\qk\\
        &\qquad-\eps(39.06204587\qi + 23.42489306\qj + 0.3089744570\qk),\\
    h_2 &= -1.847095642 + 0.6951435567\qi + 2.046951289\qj - 0.3765517306\qk \\
        &\qquad+ \eps(47.58933550\qi - 20.43787422\qj - 23.24757084\qk),\\
    k_1 &= -1.847095642 + 1.867767136\qi + 1.089422750\qj - 0.3736701032\qk\\
        &\qquad+\eps(23.23952734\qi - 50.18964901\qj - 30.16489659\qk),\\
    k_2 &= 3.437729498 - 0.7480366121\qi + 0.2749107239\qj - 1.928562744\qk\\
        &\qquad-\eps(14.71223772\qi - 6.326881730\qj - 6.608351290\qk).
  \end{aligned}
\end{equation}
The (non-normalized) Plücker coordinates of the four revolute axes in a certain
configuration of the mechanism and transformed by $c_2^{-1}$ are the
$\eps$-conjugates of the factors' vector parts:
\begin{equation*}
  (h_1 - \cj{h}_1)_\eps,\quad
  (h_2 - \cj{h}_2)_\eps,\quad
  (k_2 - \cj{k}_2)_\eps,\quad
  (k_1 - \cj{k}_1)_\eps.
\end{equation*}

For visualizing the linkage (cf. Figure~\ref{fig:1}) and animating it, we used
the open source Python package \texttt{rational\textunderscore linkages}
described in \cite{huczala2024}. It can also be used for computing the
factorizations \eqref{eq:40} of the motion polynomial~$C$.

\begin{figure}
  \centering
\includegraphics[page=1]{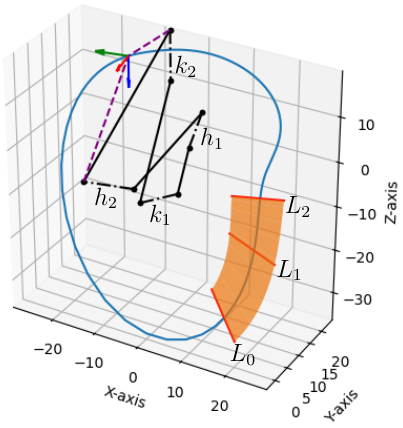}\includegraphics[page=2]{figures}\\
  \includegraphics[page=3]{figures}\includegraphics[page=4]{figures}
  \caption{Bennett linkage with revolute axes in dash-dotted linestyle. In the
    top-left picture, they are labeled by their corresponding factors in the
    factorization \eqref{eq:40}. Links are drawn as solid lines. The moving
    frame is attached to the link connecting $h_2$ and $k_2$. Its $z$-axis
    interpolates four prescribed lines $L_0$, $L_1$, and $L_2$ at parameter
    times $t = -1$ (top right), $t = 0$ (bottom left), and $t = 1$ (bottom
    right). The trajectory of the origin and a relevant part of the kinematic
    ruled surface through these lines is displayed as well.}
  \label{fig:1}
\end{figure}

\section{Conclusion}
\label{sec:conclusion}

This text presents a complete solution to the problem of creating a rational
ruled surface $L$ as line trajectory of a rational motion $C = P + \eps D$ of
minimal degree in the dual quaternion model of space kinematics. In contrast to
point trajectories, the resulting motion does not generally exist and, if it
exists, need not be unique. Existence is related to $L$ being ``kinematic'',
uniqueness of the minimal degree motion (up to translation along and rotation
around the moving line) is related to non-trivial real polynomial factors of the
primal part of $L$. The primal $P$ of the rational motion can be computed by
methods known from literature. Our approach provides more or less explicit
formulas for the dual part $D$. Besides elementary arithmetic operations, only
polynomial division with remainder and the extended Euclidean algorithm are
required.

From the resulting rational motion, closed-loop mechanisms \cite{hegedus13} or
networks of such mechanisms \cite{li18} can be created via motion factorization,
at least in generic cases. Software implementations for this task do exist
\cite{huczala2024} and a ``Universality Theorem for Kinematic Ruled Surfaces''
along the lines of \cite{Gallet16,li18} seems to be within reach.

In its original version, Kempe's Universality Theorem speaks about the creation
of \emph{algebraic} curves as point trajectories of linkages. In the light of
this, the construction of minimal degree \emph{algebraic} motions to a given
algebraic ruled surface is an interesting topic of research. This problem has
not yet been addressed at all, also not for point and plane trajectories.

A gap in the synthesis procedure along these lines is interpolation of suitable
ruled surfaces. While some research has been dedicated to interpolation
\cite{odehnal08,odehnal17} and approximation \cite[Chapter~4]{pottmann10} with
ruled surfaces, interpolation or approximation procedures with the kinematicity
of the ruled surface as additional constraint have not been considered so far.
We presented one example in Section~\ref{sec:mechanism-design} but fundamental
issues still remain to be explored.

\section*{Acknowledgment}

Zülal Derin Yaqub was supported by the Austrian Science Fund (FWF) P~33397-N
(Rotor Polynomials: Algebra and Geometry of Conformal Motions) and also
gratefully acknowledges the support provided by the Scientific and Technological
Research Council of Türkiye, TUBITAK-2219 -- International Postdoctoral Research
Fellowship Program for Turkish Citizens.

\bibliographystyle{elsarticle-num}

\end{document}